\newcommand{\Fn}[1]{\myFn{\textup{#1}}}
\newlength\mylen
\newcommand\otherinput[1]{%
	\settowidth\mylen{\KwIn{}}%
	\setlength\hangindent{\mylen}%
	\hspace*{\mylen}#1\\}
\newcommand\otheroutput[1]{%
	\settowidth\mylen{\KwOut{}}%
	\setlength\hangindent{\mylen}%
	\hspace*{\mylen}#1\\}
\newcommand{\var}[1]{\textit{#1}}
\newcommand\poly[1][]{\mathcal{P}_{#1}}
\newcommand\cons[1][]{\mathcal{C}_{#1}}
\newcommand\region[1][]{\mathcal{R}_{#1}}
\newcommand\frontier[1][]{\mathcal{F}_{#1}}
\newcommand{\algo}[1]{Algorithm~#1}
\newcommand{\glpk}{GLPK\xspace}
\newcommand{\flint}{\textsc{flint}\xspace}
\newcommand{\fig}[1]{Figure~#1}
\newcommand{\seci}[1]{Section~#1}
\newcommand{\equa}[1]{Equation~#1}
\newcommand{\prob}[1]{Problem~#1}
\newcommand{\MQ}{\mathbb{Q}}
\newcommand{\MF}{\mathbb{F}}
\newcommand{\rational}[1][]{\ensuremath{
		\ifthenelse{\equal{#1}{}}{\MQ}{{#1}^{\MQ}}}}
\newcommand{\float}[1][]{\ensuremath{
		\ifthenelse{\equal{#1}{}}{\MF}{{#1}^{\MF}}}}
\newcommand{\bothfields}[1][]{\ensuremath{
		\ifthenelse{\equal{#1}{}}{\MQ \times \MF}{{#1}^{\MQ \times \MF}}}}
\newcommand{\none}{ \textsf{none}}
\title{An efficient parametric linear programming solver and application to polyhedral projection}
\author{Hang Yu}
\author{David Monniaux}
\affil{
	Univ. Grenoble Alpes, CNRS, Grenoble INP\thanks{Institute of Engineering Univ. Grenoble Alpes}\\
	F-38000 Grenoble, France\\
	\url{First-name.Last-name@univ-grenoble-alpes.fr} \\
}
\date{October 8, 2019}
\newtheorem{theorem}{Theorem}
\theoremstyle{definition}
\newtheorem{definition}{Definition}
\begin{document}
\maketitle              % typeset the header of the contribution
\begin{abstract}
  Polyhedral projection is a main operation of the polyhedron abstract domain.
It can be computed via parametric linear programming (PLP), which is more efficient than the classic Fourier-Motzkin elimination method.

In prior work, PLP was done in arbitrary precision rational arithmetic.
In this paper, we present an approach where most of the computation is performed in floating-point arithmetic, then exact rational results are reconstructed.

We also propose a workaround for a difficulty that plagued previous attempts at using PLP for computations on polyhedra: in general the linear programming problems are degenerate, resulting in redundant computations and geometric descriptions.
\end{abstract}

\section{Introduction and related work}
Abstract interpretation \cite{cousot_1977_popl} is an approach for obtaining invariant properties of programs, which may be used to verify their correctness.
Abstract interpretation searches for invariants within an \textit{abstract domain}.
For numerical properties, a common and cheap choice is one interval per variable per location in the program, but this cannot represent relationships between variables.
Such imprecision often makes it impossible to prove properties of the program using that domain. 
If we retain linear equalities and inequalities between variables, we obtain the domain of \emph{convex polyhedra} \cite{cousot_1978_automatic}, which is more expensive, but more precise.

Several implementations of the domain of convex polyhedra over the field of rational numbers are available.
The most popular ones for abstract interpretation are NewPolka\footnote{Now distributed as part of APRON \url{http://apron.cri.ensmp.fr/library/}}
and the Parma Polyhedra Library (PPL) \cite{bagnara_2008_ppl}.
These libraries, and others, use the \emph{double description} of polyhedra: as \emph{generators} (vertices, and for unbounded polyhedra, rays and lines) and constraints (linear equalities and inequalities).
Some operations are easier on one representation than on the other, and some, such as removing redundant constraints or generators, are easier if both are available.
One representation is computed from the other using Chernikova's algorithm \cite{chernikova_1968_algorithm,LeVerge94}.
This algorithm is expensive in some cases, and, furthermore, in some cases, one representation is exponentially larger than the other.
This is in particular the case of the generator representation of hypercubes or, more generally, products of intervals;
thus interval analysis which simulate using convex polyhedra in the double description has cost exponential in the dimension.

In 2012 Verimag started implementing a library using constraints only, called VPL (Verified Polyhedra Library)
%\footnote{Contributors: Alexis Fouilh\'{e}, Alexandre Mar\'{e}chal, Sylvain Boulm\'{e}, Hang Yu, Micha\"{e}l P\'{e}rin, David Monniaux.}
\cite{fouilhe_2015_phd,marechal_2017_phd}.
There are several reasons for using only constraints; we have already cited the high generator complexity of some polyhedra commonly found in abstract interpretation, and the high cost of Chernikova's algorithm.
Another reason was to be able to certify the results of the computation, in particular that the obtained polyhedra includes the one that should have been computed, which is the property that ensures the soundness of abstract interpretation.
One can certify that each constraint is correct by exhibiting coefficients, as in Farkas' lemma.

In the first version of VPL, all main operations boiled down to projection, performed using Fourier-Motzkin elimination \cite{dantzig_1972_fourier}, but this method generates many redundant constraints which must be eliminated at high cost.
Also, for projecting out many variables $x_1,\dots,x_n$, it computes all intermediate steps (projection of $x_1$, then of $x_2$\dots), even though they may be unneeded and have high description complexity.
In the second version, projection and convex hull both boil down to \emph{parametric linear programming}  \cite{jones_2008_polyhedral}.
The current version of VPL is based on a parametric linear programming solver implemented in arbitrary precision arithmetic in OCaml \cite{marechal_2017_sas}.

In this paper, we improved on this approach in two respects.
\begin{itemize}
\item We replace most of the exact computations in arbitrary precision rational numbers by floating-point computations performed using an off-the-shelf linear programming solver.
  We can however recover exact solutions and check them exactly,
  an approach that has previously been used for SMT-solving \cite{monniaux_2009_using,king_2014_leveraging}.

\item We resolve some difficulties due to geometric degeneracy in the problems to be solved, which previously resulted in many redundant computations.
\end{itemize}
Furthermore, the solving is divided into independent tasks, which may be scheduled in parallel. The parallel implementation is covered in \cite{coti_monniaux_yu_2019}.
% ANONYMIZED
% --- this involves delicate lock-free structures and checks to prevent redundant computations.
%We shall not cover the parallel implementation here and refer the reader to \cite{Coti_Monniaux_Yu_ICCS2019}.

\section{Notations and preliminaries}
\subsection{Notations}
Capital letters (e.g. $A$) denote matrices, small bold letters (e.g.~$\bm{x}$) denote vectors, small letters (e.g.~$b$) denote scalars.
The $i$th row of $A$ is $\bm{a}_{i\bullet}$, its $j$th column is $\bm{a}_{\bullet j}$.
$\poly:A\bm{x} + \bm{b} \geq 0$ denotes a polyhedron and $\cons$ a constraint. The $i$th constraint of $\poly$ is $\cons[i]$: $\bm{a}_{i \bullet} \bm{x} \geq b_i$, where $b_i$ is the $i$th element of $\bm{b}$.
$a_{ij}$ denotes the element at the $i$th row and the $j$th column of $A$.
$\rational$ denotes the field of rational numbers, and $\float$ is the set of finite floating-point numbers, considered as a subset of $\rational$.

\subsection{Linear programming}
Linear programming (LP) consists in getting the optimal value of a linear function $Z(\bm{\lambda})$ subject to a set of linear constraints $A\bm{\lambda} = \bm{b},\ \bm{\lambda} \geq 0$~\footnote{This is the canonical form of the LP problem. All the LP problems can be transformed into this form.},
%\begin{align}
%	\begin{aligned}
%		\text{maximize} \quad &Z(\bm{\lambda}) \\
%		\text{subject to} \quad &A\bm{\lambda} \leq \bm{b}
%	\end{aligned}
%\end{align}
where $\bm{\lambda}$ is the vector of variables.
The optimal value $Z^*$ is reached at $\bm{\lambda^*}$:
$Z^* = Z(\bm{\lambda^*})$.

\subsection{Basic and non-basic variables}
We use the implementation of the simplex algorithm in \glpk%
\footnote{The GNU Linear Programming Toolkit (GLPK) is a linear programming solver implemented in floating-point arithmetic. \url{https://www.gnu.org/software/glpk/}}
as LP solver.
In the simplex algorithm each constraint is expressed in the form
$(\lambda_B)_i = \sum_{j=1}^{n} a_{ij}(\lambda_N)_j+c_i$, where $(\lambda_B)_i$ is known as a \emph{basic variable}, the $(\lambda_N)_j$ is \emph{non-basic variable}, and $c_i$ is a constant.
The basic variables constitute a \emph{basis}.
The basic and non-basic variables form a partition of the variables, and
the objective function is obtained by substituting the basic variables with non-basic variables.

\subsection{Parametric linear programming}
A parametric linear program (PLP) is a linear program, subjecting to $A\bm{\lambda} = \bm{b},\ \bm{\lambda} \geq 0$, whose objective function $Z(\bm{\lambda},\bm{x})$ contains parameters $\bm{x}$ appearing linearly.%
\footnote{There also exist parametric linear programs where the parameters are in the constant terms of the inequalities, we do not consider them here.}
%
%The solution of a PLP is a piecewise linear function of $\bm{x}$:
%\begin{equation}
%Z^*(\bm{x}) = \begin{cases}
%Z_1^*(\bm{x}) \qquad \text{if \ $\bm{x} \in$} \region[1]  \\
%\vdots \\
%Z_n^*(\bm{x}) \qquad \text{if \ $\bm{x} \in$} \region[n] \\
%\end{cases}
%\end{equation}
%where $\region[i]\ (i\in {1,\cdots,n})$ is the region of parameters $\bm{x}$ corresponding to each optimal function $Z_i^*(\bm{x})$.
%The regions are polyhedra whose variables are the parameters $\bm{x}$.
%
The PLP reaches optimum at the vertex $\bm{\lambda}^*$, and the optimal solution is a set of $(\region[i], Z_i^*(\bm{x}))$. $\region[i]$ is the region of parameters $\bm{x}$, in which the basis does not change. $Z_i^*(\bm{x})$ is the optimal function corresponding to $\region[i]$, meaning that all the parameters in $\region[i]$ will lead to the same optimal function $Z_i^*(\bm{x})$. In the case of \emph{primal degeneracy} (\seci{\ref{sec:deg}}), the optimal vertex $\bm{\lambda}^*$ has multiple partitions of basic and non-basic variables, thus an optimal function can be obtained by different bases, i.e., several regions share the same optimal function.

\subsection{Redundant constraints}
\begin{definition}[Redundant]
A constraint is said to be redundant if it can be removed without changing the shape of the polyhedron.
\end{definition}

In our algorithms, there are several steps at which redundant constraints must be removed, which we call \textit{minimization} of the polyhedron. For instance we have $P=\{C_1:x_1 - 2x_2 \leq -2, C_2: -2x_1 + x_2 \leq -1, C_3: x_1 + x_2 \leq 8, C_4: -2x_1 - 4x_2 \leq -7\}$, and $C_4$ is a redundant constraint.

%In our algorithms, there are several steps at which redundant constraints must be removed, which we call \textit{minimization} of the polyhedron. For instance we have $P=\{C_1:x_1 - 2x_2 \leq -2, C_2: -2x_1 + x_2 \leq -1, C_3: x_1 + x_2 \leq 8, C_4: -2x_1 - 4x_2 \leq -7\}$ (\fig{\ref{fig:redundant}}), and $C_4$(the red one) is a redundant constraint.
%
%\begin{figure}[t]
%	\centering
%	\includegraphics[width=0.35\textwidth, height=3cm]{}
%	\caption{Example of a redundant constraint}
%	\label{fig:redundant}
%\end{figure}

The redundancy can be tested by Farkas' Lemma: a redundant constraint can be expressed as the combination of some other constraints.
\begin{theorem}[Farkas' Lemma]
	Let $A \in \mathbb {R} ^{m\times n} A \in \mathbb {R} ^{m\times n}$ and $\bm{b} \in \mathbb {R} ^{m} \bm{b} \in \mathbb {R} ^{m}$. Then exactly one of the following two statements is true:
	\begin{itemize}
	\item There exists an  $\bm{x} \in \mathbb {R} ^{n}$ such that $A\bm{x}= \bm{b}$  and $\bm{x} \geq 0$.
	\item  There exists a $\bm {y} \in \mathbb {R} ^{m}$ such that ${A}^{\mathsf {T}}\bm{y} \geq 0$ and $\bm{b}^{\mathsf {T}}\bm{y} <0$.
	\end{itemize}
\end{theorem}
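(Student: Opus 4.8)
The plan is to prove this as a theorem of alternatives, using the geometry of the finitely generated cone $K = \{A\bm{x} : \bm{x} \geq 0\}$ together with a separating hyperplane argument. First I would dispatch the easy half, namely that the two statements cannot hold simultaneously. Suppose both did: then $A\bm{x} = \bm{b}$ with $\bm{x} \geq 0$, and $A^{\mathsf{T}}\bm{y} \geq 0$ with $\bm{b}^{\mathsf{T}}\bm{y} < 0$. Rewriting $\bm{b}^{\mathsf{T}}\bm{y} = (A\bm{x})^{\mathsf{T}}\bm{y} = \bm{x}^{\mathsf{T}}(A^{\mathsf{T}}\bm{y})$ exhibits an inner product of two nonnegative vectors, so $\bm{b}^{\mathsf{T}}\bm{y} \geq 0$, contradicting $\bm{b}^{\mathsf{T}}\bm{y} < 0$. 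Hence at most one alternative holds.

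For the substantive direction --- that at least one holds --- I would introduce the cone $K = \{A\bm{x} : \bm{x} \in \mathbb{R}^n,\ \bm{x} \geq 0\}$, i.e.\ the set of all nonnegative combinations of the columns of $A$. If $\bm{b} \in K$ the first statement holds by definition, so assume $\bm{b} \notin K$. The idea is to strictly separate $\bm{b}$ from $K$: there is a vector $\bm{y}$ with $\bm{y}^{\mathsf{T}}\bm{b} < \inf_{\bm{z} \in K} \bm{y}^{\mathsf{T}}\bm{z}$. Because $K$ is a cone containing the origin, this infimum must equal $0$: it is $\leq 0$ since $\bm{0} \in K$, and it cannot be negative, for if $\bm{y}^{\mathsf{T}}\bm{z}_0 < 0$ for some $\bm{z}_0 \in K$, then the scalings $t\bm{z}_0 \in K$ would drive $\bm{y}^{\mathsf{T}}(t\bm{z}_0) \to -\infty$ as $t \to +\infty$, contradicting finiteness of the infimum. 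Consequently $\bm{y}^{\mathsf{T}}\bm{z} \geq 0$ for every $\bm{z} \in K$; applying this to each column $\bm{a}_{\bullet j} \in K$ yields $A^{\mathsf{T}}\bm{y} \geq 0$, while $\bm{y}^{\mathsf{T}}\bm{b} < 0$ follows from the strict separation. This is exactly the second statement.

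The main obstacle is justifying the separation step, which requires that $K$ be \emph{closed}, since the separating hyperplane theorem applies only to a closed convex set and an external point. Convexity of $K$ is immediate, but closedness of a finitely generated cone is the genuinely nontrivial fact and is where I expect the real work to lie. I would establish it by a Carath\'eodory-type argument: every element of $K$ is in fact a nonnegative combination of a \emph{linearly independent} subset of the columns of $A$. There are only finitely many such subsets, and each spans a cone that is the image of a closed orthant under an injective linear map, hence closed; $K$ is then the finite union of these closed cones and so is itself closed. Once closedness is secured, the separation argument above applies cleanly and completes the proof.
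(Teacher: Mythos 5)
Your proof is correct, but there is nothing in the paper to compare it against: the paper states Farkas' Lemma as a classical result (it is used only to characterize redundant constraints and to justify the rational fallback solvers) and gives no proof of it. Your argument is the standard geometric one, and it is assembled correctly: the easy half (the two alternatives cannot both hold) via $\bm{b}^{\mathsf{T}}\bm{y} = \bm{x}^{\mathsf{T}}(A^{\mathsf{T}}\bm{y}) \geq 0$; the substantive half by strictly separating $\bm{b}$ from the cone $K = \{A\bm{x} : \bm{x} \geq 0\}$, with the correct observation that the infimum of $\bm{y}^{\mathsf{T}}\bm{z}$ over the cone must be $0$, so that separation yields exactly the second alternative. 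You also correctly identify the genuinely nontrivial point, namely closedness of a finitely generated cone, without which the separation theorem does not apply, and your plan for it (conic Carath\'eodory reduction to linearly independent generators, each such subcone being the homeomorphic image of a closed orthant under an injective linear map, and $K$ a finite union of these) is the standard and correct repair; the only piece left as a sketch is the dependence-shifting argument inside the Carath\'eodory step, which is routine. For context, an alternative route more in the spirit of this paper's machinery would be to derive Farkas' Lemma from LP duality or from termination of the simplex method with an anticycling rule (e.g.\ Bland's rule), which avoids topological arguments entirely; your separation proof is more self-contained but requires the closedness lemma, whereas the simplex route hides the difficulty in the termination proof instead.
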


It is easy to determine the redundant constraints using Farkas' lemma, but in our case we have much more irredundant constraints than redundant ones, in which case using Farkas' lemma is not efficient. A new minimization algorithm which can find out the irredundant constraints more efficiently is explained in \cite{marechal_2017_vmcai}.

%We implemented the minimization method in \cite{marechal_2017_vmcai} in floating-point numbers and it is parallelized with OpenMP\cite{openmp_4.5}.
%The use of floating-point numbers here cannot cause a soundness problem: in the worst case, we will eliminate constraints that should not be eliminated.

\section{Algorithm}
\label{sec:algo}

As our PLP algorithm is implemented with mix of rational numbers and floating-point numbers, we will make explicit the type of data used in the algorithm.
In the pseudo-code, we annotate data with $(name^{type})$, where $name$ is the name of data and $type$ is either \rational~or/and \float.
\bothfields~ means that the data is stored in both rational and floating-point numbers.

Floating-point computations are imprecise, and thus the floating-point LP solver may provide an incorrect answer: it may report that the problem is infeasible whereas it is feasible, that it is feasible even though it is infeasible, and it may provide an ``optimal'' solution that is not truly optimal.
What our approach guarantees is that, whatever the errors committed by the floating-point LP solvers, the polyhedron that we computed is a valid over-approximation: it always includes the polyhedron that should have been computed.
Details will be explained later in this section and in \seci{\ref{sec:checker}}.

In this section we do not consider the \emph{degeneracy}, which will be talked in \seci{\ref{sec:deg}}.

\subsection{Flow chart}
The \fig{\ref{fig:flow_chart}} shows the flow chart of our algorithm. The rectangles are processes and diamonds are decisions. The processes/decisions colored by orange are computed by floating-point arithmetic, and that by green uses rational numbers. The dotted red frames show the cases that rarely happen, which means that most computation in our approach uses floating-point numbers.

In \seci{\ref{sec:algo}} we will present the overview of the algorithm.
Then we will explain into details the processes/decisions framed by dashed blue rectangles in \seci{\ref{sec:checker}}.

\begin{figure}[!htb]
	\centering
	\includegraphics[width=0.95\textwidth, height=13cm]{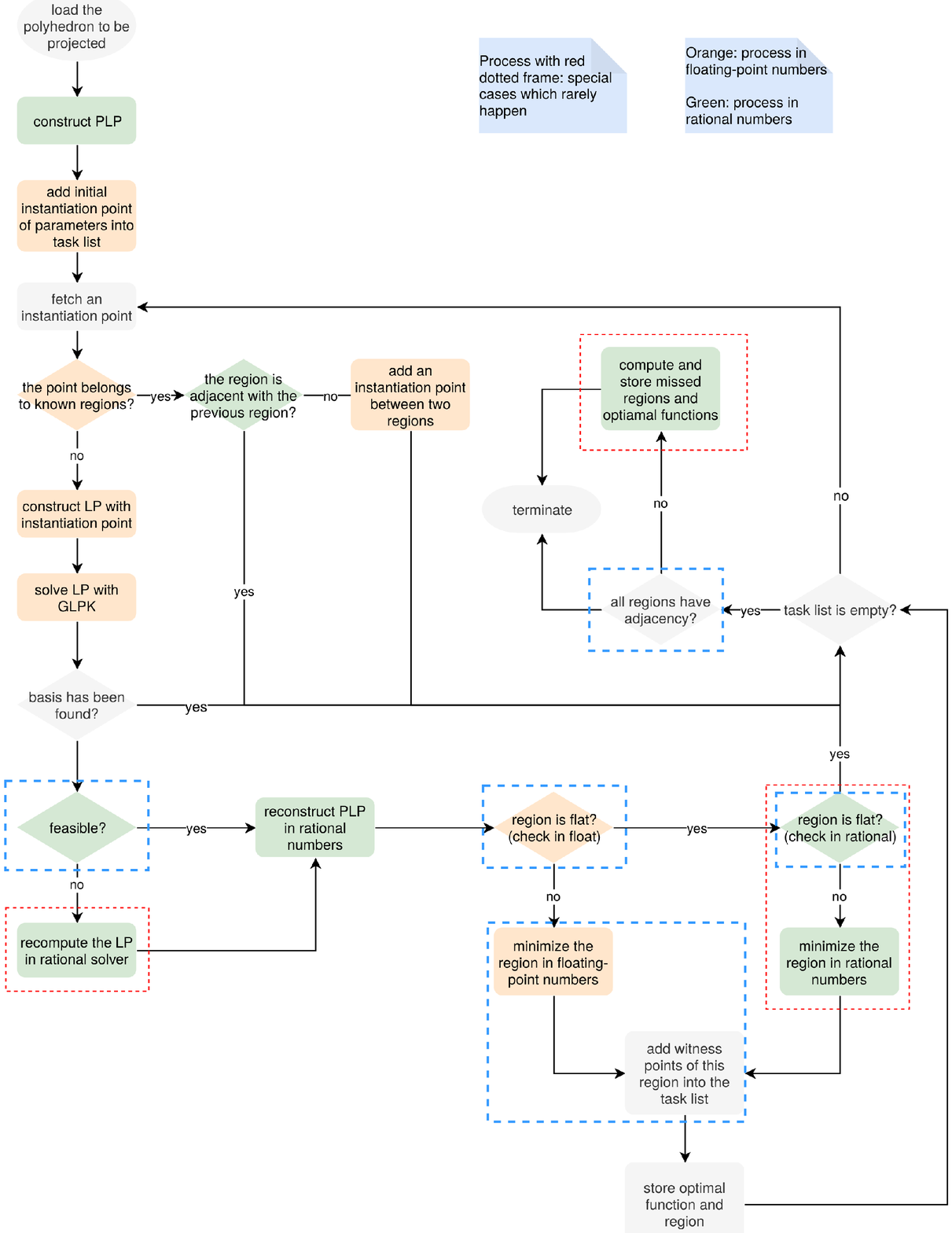}
	\caption{Flow chart}
	\label{fig:flow_chart}
\end{figure}

\subsection{Ray-tracing minimization}
At several steps we need to remove redundant constraints from the description of a polyhedron. We here present an efficient ray-tracing minimization method based on \cite{marechal_2017_vmcai}.
Their approach used rational computations, while ours uses floating-point arithmetic. The use of floating-point numbers here will not cause a soundness problem: in the worst case, we will eliminate constraints that should not be removed. In other words, when the floating-point algorithm cannot determine the redundancy, the corresponding constraints will be reported as redundant.

There are two phases in ray-tracing minimization. In the first phase we launch rays to the constraints, and the first hit constraints are irredundant. The remaining constraints will be determined in the second phase: if we can find the \textit{irredundancy witness} point, then the constraint is irredundant. The algorithm is shown in \algo{\ref{algo:minimization}}.
\begin{definition}[Irredundancy Witness]
	The irredundancy witness of a constraint $\cons[i]$ is a point that violates $\cons[i]$ but satisfies the other constraints.
\end{definition}
\begin{algorithm}[!htb]
	\DontPrintSemicolon
	\SetNoFillComment
	\KwIn{\var{\float[poly]}: the polyhedron to be minimized}
	\KwOut{the index of the irredundant constraints}
	\Fn{Minimize(\var{\float[poly]})}{
		\var{\float[p]} = GetInternalPoint(\var{\float[poly]}) \\
		\var{\float[rays]} = LaunchRays({\var{\float[poly]}, \var{\float[p]}}) \\
		\ForEach{\var{\float[ray]} in \var{\float[rays]}} {
			\var{constraintIdx} = FirstHitConstraint(\var{\float[poly]}, \var{\float[ray]}, \var{\float[p]})
			SetAsIrredundant(\var{\float[poly]}, \var{constraintIdx})
		}
		\ForEach{constraint \var{idx} in undetermined constraints} {
			\uIf{cannot determine} {
				SetAsRedundant(\var{\float[poly]}, \var{idx})\\
			}
			\Else {
				\uIf{found irredundancy witness point} {
					SetAsIrredundant(\var{\float[poly]}, \var{idx})
				}
				\Else{SetAsRedundant(\var{\float[poly]}, \var{idx})}
			}
		}
		\Return{the irredundant constraints}
	}
	\caption{Ray-tracing minimization algorithm.}
	\label{algo:minimization}
\end{algorithm}

\subsection{Parametric linear programming solver}
The algorithm is shown in \algo{\ref{algo:plp_serial}}. Firstly we construct the PLP problem, and then we solve it by solving a set of LP problems via floating-point LP solver. Then the rational solution will be reconstructed based on the information obtained from the LP solver.
We will explain each step in the following sections. Our focus will be on the cooperation of rational and floating-point numbers, and the tricks for dealing with floating-point arithmetic.

\begin{algorithm}[!htb]
	\DontPrintSemicolon
	\SetNoFillComment
	\KwIn{\var{\rational[poly]}: the polyhedron to be projected}
	\otherinput{$[x_p,...,x_q]$: the variables to be eliminated}
	\otherinput{\var{n}: number of initial points}
	\KwOut{\var{\rational[optimums]} the set of optimal function}
	\otheroutput{\var{\bothfields[regions]} the corresponding regions}
	\Fn{Plp(\var{\rational[poly]}, $[x_p,...,x_q]$, \var{n})}{
		\var{\bothfields[plp]} = ConstructPlp(\var{\rational[poly]}, $[x_p,...,x_q]$) \\
		\var{\float[worklist]} = GetInitialPoints(\var{\rational[poly], n}) \\
		\var{\rational[optimums]} = \none \\
		\var{\bothfields[regions]} = \none \\
		\While{\var{\float[worklist]} $\neq$ \none }{
			(\var{\float[w], \rational[R_{from}], $F_{from}$}) = getTask(\var{\float[worklist]}) \\
			\var{\rational[R_{curr}]} = CheckCovered(\var{\float[regions], \float[w]}) \\
			\If{\var{\rational[R_{curr}]} == \none}{
				(\var{basicIndices, nonbasicIndices}) = GlpkSolveLp(\var{\float[w], \float[plp]}) \\
				\var{\rational[reconstructMatrix]} = Reconstruct(\var{\rational[plp], basicIndices}) \\
				\var{(\rational[newOptimum], \bothfields[newRegion])} = ExtractResult(\var{\rational[reconstructMatrix], nonbasicIndices}) \\
				(\var{activeIndices},~\float[witnessList]) = Minimize(\var{\float[newRegion]}) \\
				\var{\rational[minimizedR]} = GetRational(\var{\rational[newRegion]}, activeIndices) \\
				Insert(\var{\rational[optimums]}, \var{\rational[newOptimum]}) \\
				Insert(\var{\rational[regions]}, \var{\rational[newRegion]}) \\
				AddWitnessPoints(\var{\float[witnessList], worklist}) \\
				\var{\rational[R_{curr}]}=\var{\rational[minimizedR]}
			}
			\uIf{Adjacent(\var{\rational[R_{curr}], \rational[R_{from}], $F_{from}$})}{
				$F_{curr}$ = GetCrossFrontier(\var{\rational[R_{curr}], \rational[R_{from}], $F_{from}$}) \\
				StoreAdjacencyInfo(\var{\rational[R_{from}], $F_{from}$, \rational[R_{curr}], $F_{curr}$})
			}
			\Else{
				AddExtraPoint(\var{worklist, \var{\rational[R_{curr}], \rational[R_{from}]}})
			}
			
		}
	}
	\caption{Parametric linear programming algorithm.}
	\label{algo:plp_serial}
\end{algorithm}

\subsubsection{Constructing PLP for projection}
The polyhedron to be projected is $\poly$: $A\bm{x} + \bm{b} \geq 0$. To perform projection, we can construct a PLP problem shown in \prob{\ref{problem:plp}}. In this problem, $\bm{x}$ are parameters, and $\bm{\lambda}$ are decision variables, where $\bm{x}=[x_1, \cdots, x_m]^\mathsf{T}$, $\bm{\lambda}=[\lambda_0, \cdots, \lambda_n]^\mathsf{T}$. Assume that we wish to eliminate $x_p,\cdots,x_q$, where $1 \leq p \leq q \leq m$.

\begin{align}
	\begin{aligned}
		\text{minimize} \quad &\sum_{i=1}^{n} (\bm{a}_{i\bullet} \bm{x} + b_i)\lambda_i + \lambda_0 \\
		\text{subject to} \quad &\sum_{i=1}^{n} (\bm{a}_{i\bullet} \bm{p} + b_i)\lambda_i + \lambda_0 = 1  \quad \text{(*)} \\
		&\sum_{i=1}^{n} a_{ij}\lambda_i = 0 \quad (\forall j \in \{p, \cdots, q\}) \quad \text{(**)} \\
		\text{and} \quad &\lambda_i \geq 0 \quad (\forall i \in \{0, \cdots, n\} ) \\
	\end{aligned}
	\label{problem:plp}
\end{align}
where $\bm{p}=[p_1,\cdots,p_m]$ is a point inside $\poly$. The constraint $(*)$ is called normalization constraint.
To compute the convex hull of $\poly$ and $\poly'$: $A'\bm{x} + \bm{b}' \geq 0$, we just replace the constraints $(**)$ with $A^T\bm{\lambda} - A'^T\bm{\lambda}' = 0,\ \bm{b}^T\bm{\lambda}+\lambda_0 - \bm{b}'^T\bm{\lambda}'-\lambda_0'=0$.
For more details about constructing the PLP problem of projection, please refer to \cite{jones_2008_polyhedral,marechal_2017_sas}.

\subsubsection{Solving PLP}
The PLP problem represents a set of LP problems, whose constraints are the same and objective function varies with the instantiation of the parameters.
Here is a brief sketch of our solver.
We maintain a working set of tasks yet to be performed.
At the beginning, a random vector of parameters (or a fixed one) is chosen as the initial task to trigger the algorithm.
Then, as long as the working set is not empty,
a vector of parameters $\bm{w}$ is taken from the working set.
We solve the (non-parametric) linear programming problem for this vector of parameters, using an off-the-shelf floating-point solver.
From the information of the final basis reached, we obtain a polyhedral region $\region$ of parameters, to which $\bm{w}$ belongs, that all share the same optimum and the same basis, as it will be explained below.
In general, this region is obtained with redundant constraints, so we minimize its representation.
The witness points $\bm{w}_1,\dots,\bm{w}_m$ of the irredundant constraints lie outside of $\region$, and are inserted into the working set.
We also maintain a set of already created regions: a vector $\bm{w}$ of parameters is ignored if it lies inside one of them.
The algorithm stops when the working set is empty, meaning that the full set of parameters is covered by regions.

Here is how we process a vector $\bm{w}$ from the working set.
We solve the LP problem:

\begin{align}
	\begin{aligned}
		\text{minimize} \quad &\sum_{i=1}^{n} (\bm{a}_{i\bullet} \bm{w} + b_i) \lambda_i + \lambda_0 \\
		\text{subject to} \quad &\sum_{i=1}^{n} (\bm{a}_{i\bullet} \bm{p} + b_i) \lambda_i + \lambda_0 = 1  \quad \text{(*)} \\
		&\sum_{i=1}^{n} a_{ij}\lambda_i = 0 \quad (\forall j \in \{p, \cdots, q\}) \\
		\text{and} \quad &\lambda_i \geq 0 \quad (\forall i \in \{0, \cdots, n\} ) \\
	\end{aligned}
	\label{problem:lp}
\end{align}

\subsubsection{Obtaining rational solution}
We solve this LP problem in floating-point using \glpk.
Had the solving been done in exact arithmetic, one could retain the optimal point $\bm{\lambda}^*$, but here we cannot use it directly.
Instead, we obtain the final partition of the variables into basic and non-basic variables, and from this partition we can recompute exactly, in rational numbers, the optimum $\bm{\lambda}^*$, as well as a certificate that it is feasible.

Let $M$ denote the matrix of constraints and $O$ that of the PLP objective function. The last column of the each matrix represents the constant.

\begin{equation}
M = 
\begin{bmatrix}
(A\bm{p+b})^\mathsf{T} & 1 & 1 \\
(\bm{\bm{a}_{\bullet p}})^\mathsf{T} &0 &0 \\
\vdots & \vdots & \vdots \\
(\bm{\bm{a}_{\bullet q}})^\mathsf{T} &0 &0 \\
\end{bmatrix}
\qquad
O =
\begin{bmatrix}
A^\mathsf{T} & 0 & 0 \\
\bm{b}^\mathsf{T} & 1 & 0
\end{bmatrix}
\end{equation}

To generate the result of PLP, we need to reconstruct the matrices $M$ and $O$ to make sure the objective function of PLP contains the same basis as the final tableau of the simplex algorithm: the coefficients of the basic variables in the objective function should be 0.
We extract the indices of the basic variables from that tableau; $M_B$ and $O_B$ denote the sub-matrices from $M$ and $B$ containing only the columns corresponding to the basic variables.
By linear algebra in rational arithmetic~%
\footnote{We use Flint, which provides exact rational scalar, vector and matrix computations, including solving of linear systems. \url{http://www.flintlib.org/}}
we compute a matrix $\Theta$, representing the substitution performed by the simplex algorithm. Then we apply this substitution to the objective matrix $O$ to get the new objective function $O'$: $\Theta = O_B M_B^{-1},\ O' = O - \Theta M $, where $M_B^{-1}$ denotes the inverse of $M_B$ (actually, we do not inverse that matrix but instead call a solver for systems of linear equations).

In our LP problem \ref{problem:lp}, the variables $\bm{\lambda}$ have lower bound 0, which means that when the objective function reaches the optimal, all the non-basic variables should reach their lower bound and their coefficients should be non-negative, otherwise the optimal value can decrease furthermore.
The same applies to the parametric linear problems, except that the coefficients of the objective function may contain parameters;
thus the sign conditions on these coefficients is translated to linear inequalities on these parameters.
Each non-zero column in $O'$ represents a function in $\bm{x}$, which is the coefficient of a non-basic variable. The conjunction of constraints $(O'_{\bullet j})^\mathsf{T} \bm{x} \geq 0$ constitute the region of $\bm{x}$ where $j$ belongs to the indices of non-basic variables.
This conjunction of constraints may be redundant: we thus call the minimization procedure over it.

\section{Checkers and rational solvers}
\label{sec:checker}

We compared our results with those from NewPolka.
We tested about 1.75 million polyhedra in our benchmarks.
In only 3 cases, round-off errors caused 1 face being missed.
In this section, we explain how we modified our algorithm to work around this difficulty.
The resulting implementation then computes exactly solutions to parametric linear programs, and thus exactly the same polyhedra as NewPolka.

\subsection{Verifying feasibility of the result from GLPK}
\glpk uses a threshold ($10^{-7}$ by default) to check feasibility, that is, if the solution it proposes truly is a solution.
It may report a feasible result when the problem is in fact infeasible. Assume that we have an LP problem whose constraints are $C_1: \lambda_1 \geq 0, C_2: \lambda_2 \geq 0, C_3: \lambda_1+\lambda_2 \leq 10^{-8}$, \glpk will return $(0,0)$ as a solution, whereas it is not.

We use \flint to compute the row echelon form of the rational matrix of constraints, so that the pivots are the coefficients of basic variables.
We obtain $[I\ A'] = [\bm{b}]$~\footnote{There may be rows of all zeros in the bottom of the matrix.}, where $A'$ are the coefficients of the non-basic variables.
When the LP problem reaches an optimum, the non-basic variables are at their lower bound 0, so the value of the basic variables are just the value of $\bm{b}$.
As we have the constraints that the variables are non-negative, we thus just need to verify that all coordinates in $\bm{b}$ are non-negative.
If it is not in this case, it means that \glpk does not have enough precision, which is likely due to an ill-conditioned subproblem.
In this case, we start a textbook implementation of the simplex algorithm in rational arithmetic.

\glpk may also report an optimal solution which is in fact not optimized. We did not provide a checker for this situation, as even if the solution is not optimized in the required region, it is optimized in anther region which is probably adjacent to the expected one. We keep the obtained solution, and add extra task points between the regions if they are not adjacent. Besides the adjacency checker guarantees there will be no missed face. 

\subsection{Flat regions}
Our regions are obtained from the rational matrix, and then they are converted into floating-point representation.
As the regions are normalized and intersect at the same point, they are in the shape of cones.
During the conversion, the constrains will lose accuracy, and thus a cone could be misjudged as flat, meaning it has empty interior.
For instance, we have a cone $\{ \cons[1]: - \frac{100000001}{10000000} x_1 + x_2 \leq 0, \cons[2]: \frac{100000000}{10000000} x_1 - x_2 \leq 0 \}$, which is not flat. After conversion, $\cons[1]$ and $\cons[2]$ will be represented in floating-point numbers as $\{ \cons[1]: - 10.0 x_1 + x_2 \leq 0, \cons[2]: 10.0 x_1 - x_2 \leq 0 \}$, and the floating-point cone is flat.

In this case we invoke a rational simplex solver to check the region by shifting all the constraints to the interior direction. If the region becomes infeasible after shifting, then the region is really flat; otherwise we launch a rational minimization algorithm, which is implemented using Farkas’ Lemma, to obtain the minimized region.

\subsection{Computing an irredundancy witness point}
In the minimization algorithm, the checker makes sure that the constraints which cannot be determined by floating-point algorithm will be regarded as redundant constraints. In the meantime these constraints are marked as uncertainty. If the polyhedron to be minimized is also represented by rational numbers, a rational solver will be launched to determine the uncertain constraints. As in our PLP algorithm all the regions are represented by both floating-point and rational numbers, the rational solver can always be executed when there are uncertain constrains.

Consider the case of computing the irredundant witness point of the constraint $\cons[i]$, we need to solve a feasibility problem: $\cons[i]: \bm{a_ix} < b_i$ and $\cons[j]: \bm{a_jx} \leq b_j, \forall j\neq i$.
For efficiency, we solve this problem in floating point.
However, \glpk does not support strict inequalities, thus we need tricks to deal with them.

One method is to shift the inequality constraint a little and obtain a non-strict inequality $\cons[1]': \bm{a_1x} \leq b_1 - \epsilon$, where $\epsilon$ is a positive constant.
This method is however difficult to apply properly because of the need to find a suitable $\epsilon$.
If $\epsilon$ is small, we are likely to obtain a point too close to the constraint $\cons[1]$;
if $\epsilon$ is too large, perhaps we cannot find any point. One exception is that when the polyhedron is a cone, we can always find a satisfiable point by shifting the constraints, no matter how large $\epsilon$ is.

We thus adopted another method for non-conic polyhedra.
Instead of solving a satisfiability problem, we solve an optimization problem:
\begin{align}
	\begin{aligned}
		\text{maximize} \quad &-\bm{a_ix} \\
		\text{subject to} \quad &\bm{a_jx} \leq b_j \quad \forall j \neq i\\
		&\bm{a_ix} \leq b_i\\
	\end{aligned}
\end{align}
The found optimal vertex is the solution we are looking for.

Assuming we have the polyhedron: $- x_1 + x_2 \leq 0, x_1 + x_2 \leq 7, -2x_2 < -3$.
The two methods are shown in \fig{\ref{fig:float_lp}}.
If we compute the optimum in the direction $x_2$ with constraints $- x_1 + x_2 \leq 0, x_1 + x_2 \leq 7$, we obtain a feasible point $(3.5,3.5)$.

\begin{figure}[!htb]
	\centering
	\includegraphics[width=0.45\textwidth, height=3cm]{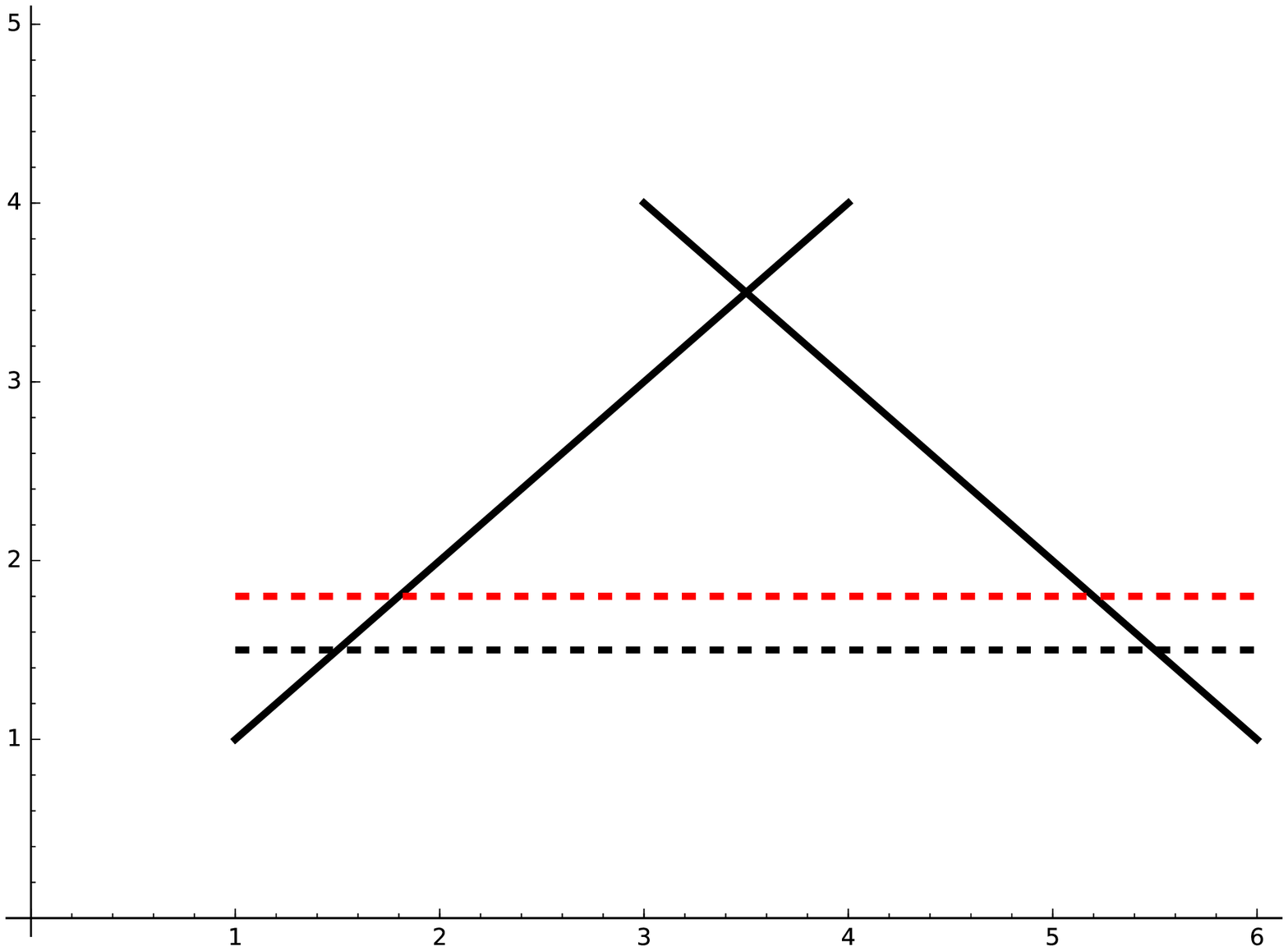}
	\includegraphics[width=0.45\textwidth, height=3cm]{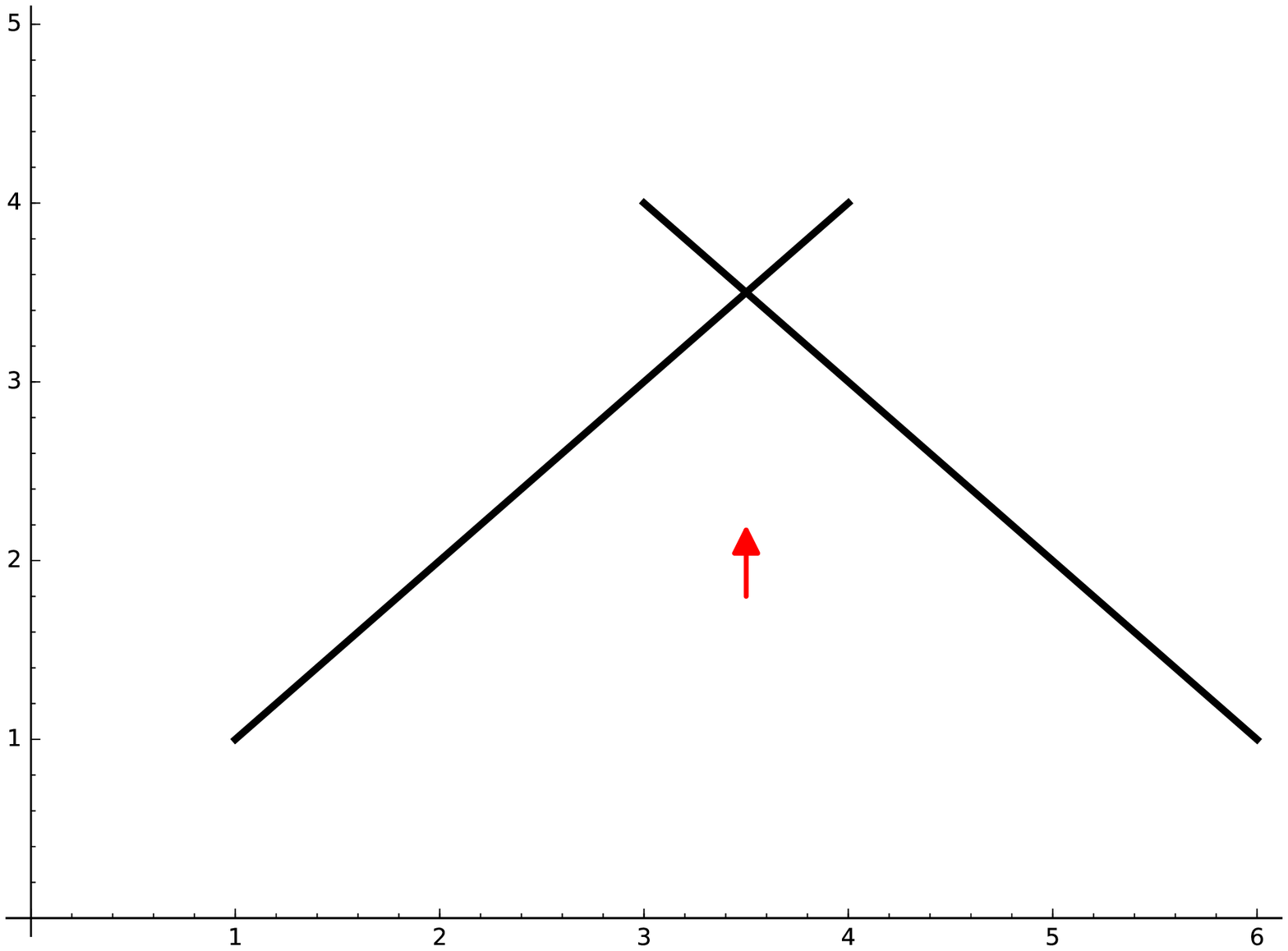}
	\caption{Solving an optimization problem instead of a feasibility problem.}
	\label{fig:float_lp}
\end{figure}

However the floating-point solver could misjudge, thus the found optimal vertex $\bm{p}$ could be infeasible. Hence we need to test $\bm{a_ip} \leq b_i - t$, where $t$ is the \glpk threshold. If the test fails, we will use the rational simplex algorithm to compute the Farkas combination: the constraint is really irredundant if the combination does not exist.

\subsection{Adjacency checker}
\label{sec:adj_checker}
We shall now prove that no face is missed if and only if for each region and each boundary of this region, another region is found which shares that boundary.

Assuming we have a situation shown in \fig{\ref{fig:faces}}: the four regions correspond to different optimal functions. $\region[1], \region[2]$ and $\region[3]$ all found their adjacencies, but $\region[4]$ is missed. In this case there exist two adjacent regions for some boundaries.
We here show that this situation will not happen.

\begin{theorem}
	No face will be missed if each region finds all the adjacent regions.
\end{theorem}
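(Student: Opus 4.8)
The plan is to prove the statement by contradiction, via a topological covering argument on the subdivision of parameter space induced by the PLP. First I would record the structural fact that underlies everything: the optimal regions $\region[i]$ form a \emph{complete polyhedral fan} of the parameter space, i.e. they are full-dimensional cones (as already noted, the normalized regions are cone-shaped and share a common apex) that cover all of $\mathbb{R}^m$ and whose pairwise intersections are lower-dimensional faces. The crucial consequence is that the relative interior of every facet (codimension-one boundary) of a region $\region[i]$ is shared with exactly one other full-dimensional region $\region[j]$, and crossing that facet moves from $\region[i]$ into $\region[j]$. This is precisely the adjacency relation recorded by \textup{GetCrossFrontier} and \textup{StoreAdjacencyInfo} in \algo{\ref{algo:plp_serial}}.

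Next I would set up the contradiction. Let $S$ be the set of regions actually discovered by the algorithm, and suppose both that every region in $S$ has found all of its adjacent regions (so $S$ is closed under the adjacency relation above) and that some region $\region[\ast]$ is nevertheless missed, i.e. $\region[\ast]\notin S$. Write $U=\bigcup_{\region\in S}\region$. Since $\region[\ast]$ is full-dimensional and not contained in $U$, the set $U$ is a proper, nonempty, closed subset of the connected parameter space, so its relative boundary $\partial U$ is nonempty and, being a union of facets of the cones in $S$, is $(m-1)$-dimensional.

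The heart of the proof is then to exhibit the promised unmatched boundary. I would pick a point $\bm{x}$ in the relative interior of a facet $F\subseteq\partial U$; such a point exists because $\partial U$ contains the relative interior of at least one facet, and the relative interior of $F$ is disjoint from all lower-dimensional faces of the fan. By the structural fact, $F$ is the common facet of exactly two full-dimensional regions, say $\region[i]$ on the $U$-side and $\region[j]$ on the other side. Then $\region[i]\in S$ while $\region[j]\notin S$, since otherwise $\bm{x}$ would lie in the interior of $\region[i]\cup\region[j]\subseteq U$, contradicting $\bm{x}\in\partial U$. Hence the discovered region $\region[i]$ has a boundary $F$ across which the adjacent region $\region[j]$ was never found, contradicting the hypothesis that every region in $S$ found all its adjacencies. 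This proves the theorem; the converse direction of the ``if and only if'' stated before it is immediate, since completeness of the fan guarantees that when no region is missed every facet already has its neighbor present.

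I expect the main obstacle to be justifying the structural fact cleanly, in particular the claim that each facet is shared by exactly two full-dimensional regions. Away from degeneracy this is the standard description of the optimal partition of a parametric linear program, but it can fail on lower-dimensional strata, where three or more regions meet along a common codimension-two face; restricting the chosen boundary point $\bm{x}$ to the relative interior of a facet is exactly what sidesteps this. Primal degeneracy, where several regions carry the same optimal function $Z_i^{\ast}(\bm{x})$ so that a crossed frontier may lead to a region of identical optimal function rather than a genuinely new face, is a further complication; since it is deferred to \seci{\ref{sec:deg}}, here I would state and use the fan structure only in the non-degenerate setting assumed throughout this section.
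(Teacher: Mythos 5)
Your overall skeleton --- a connectedness/covering argument showing that if some region were missed, a discovered region would have a facet with no discovered neighbor across it --- is sound, and it actually makes explicit a step the paper treats only informally (via the picture in \fig{\ref{fig:faces}}). But the proposal has a genuine gap at exactly the point you flag yourself as ``the main obstacle'': the structural fact that the relative interior of each facet of a region is shared with exactly one other full-dimensional region. This cannot be imported as ``the standard description of the optimal partition of a parametric linear program,'' because it is precisely the content of the theorem and of the paper's proof. The failure scenario the theorem must exclude (\fig{\ref{fig:faces}}) is exactly a violation of your structural fact: a frontier of $\region[1]$ whose other side is covered partly by $\region[2]$ and partly by the missed region $\region[4]$. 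In that scenario your argument collapses: the hypothesis is satisfied by $\region[1]$ recording $\region[2]$ as its neighbor, yet your boundary point in $\mathrm{relint}(F)$ may lie in the part of the facet bordering $\region[4]$, and no contradiction follows unless you already know that the recorded neighbor covers the \emph{whole} facet --- which is what had to be proven. Note also that at the level of regions (bases) the unique-neighbor property can genuinely fail under primal degeneracy, where several regions carrying the same optimal function may subdivide or overlap a facet; this is why the paper phrases the dichotomy in terms of faces (optimal functions $Z_j$, $Z_k$), not regions.

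The missing idea is the paper's simplex pivoting argument, which supplies exactly the local uniqueness you need: crossing a fixed frontier $\frontier$ of $\region[i]$ corresponds to a pivot with a fixed entering variable $\lambda_q$; if two regions $\region[j]$ and $\region[k]$ with $Z_j \neq Z_k$ both lay across $\frontier$, there would be two admissible leaving variables $\lambda_r$ and $\lambda_s$, and the minimum-ratio rule then forces $c_j/m_{jq} = c_k/m_{kq}$, so the two pivots produce the same optimal function $Z^*(\bm{x}) - f_q\,c_j/m_{jq} = Z^*(\bm{x}) - f_q\,c_k/m_{kq}$, contradicting $Z_j \neq Z_k$. With this lemma (``across any one frontier there is a unique optimal function on the other side''), your covering argument goes through, provided ``missed'' is phrased in terms of faces rather than regions. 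As written, however, the proposal assumes the theorem's essential content rather than proving it.
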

\begin{proof}
Assume that we cross the boundary $\frontier$ of the region $\region[i]$, and the adjacent regions are $\region[j]$ and $\region[k]$. The corresponding optimal functions are $Z_j$ and $Z_k$, and $Z_j \neq Z_k$ (otherwise no face will be
missed). From $\region[i]$ to its adjacency, we need to do one pivoting. Consider the simplex tableau in Table
\ref{tableau: proof}. Assuming the entering variable is $\lambda_q$. If there are two adjacent regions, there will be two
possible leaving variables, say $\lambda_r$ and $\lambda_s$. In the simplex algorithm we always choose the variable with
the smallest ratio of the constant and the coefficient as the leaving variable. When there are two possible leaving
variables, the value of these two ratios must be equal, that is $\frac{b_j}{a_{jq}} = \frac{b_k}{a_{kq}}$. In this case
we face the primal degeneracy, and $f^*(\bm{x})-\frac{b_j}{a_{jq}} f_q = f^*(\bm{x})-\frac{b_k}{a_{kq}} f_q$. This is a contradictory to the assumption $Z_j \neq Z_k$. Hence the situation will not happen.
	\qed
\end{proof}

\begin{table}
	\begin{tabular}{c|ccccccccccc}
		& \multicolumn{4}{c}{$\overbrace{\rule{3.5cm}{0pt}}^\text{non-basic variables}$} & \multicolumn{6}{c}{$\overbrace{\rule{5cm}{0pt}}^\text{basic variables}$} & constants \\
		& $\lambda_1$ & $\cdots$ & $\lambda_q$ & $\cdots$ & $\cdots$ & $\lambda_r$ & $\cdots$ & $\lambda_s$ & $\cdots$ & $\lambda_n$ & \\
		\hline
		objective & $f_1$ & $\cdots$ & $f_q$ & $\cdots$ & $\cdots$ & 0 & $\cdots$ & 0 & $\cdots$ & 0 & $Z^*(\bm{x})$ \\
		\hline
		$\vdots$ & \multicolumn{11}{ p{11cm} }{\dotfill} \\
		row $j$ & \multicolumn{2}{ p{2cm} }{\dotfill} & $m_{jq}$ & $\cdots$ & $\cdots$ & 1 & $\cdots$ & 0 & $\cdots$ & 0 & $c_j$ \\
		$\vdots$ & \multicolumn{11}{ p{11cm} }{\dotfill} \\
		row $k$ & \multicolumn{2}{ p{2cm} }{\dotfill} & $m_{kq}$ & $\cdots$ & $\cdots$ & 0 & $\cdots$ & 1 & $\cdots$ & 0 & $c_k$ \\
		$\vdots$ & \multicolumn{11}{ p{11cm} }{\dotfill}
	\end{tabular}
	\caption{Simplex tableau.}
	\label{tableau: proof}
\end{table}

\begin{figure}[!htb]
	\centering
	\includegraphics[width=0.45\textwidth, height=3cm]{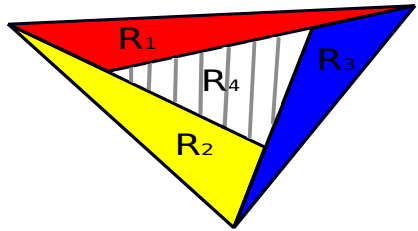}
	\caption{Example of missing faces.}
	\label{fig:faces}
\end{figure}

To find out all the faces, we just need to ensure that all the regions have their adjacencies. Although we tried to add task points between the regions which are not adjacent, there may be still missed region because of floating-point arithmetic. Hence we invoke an adjacency checker at the end of the algorithm. The information of adjacency has been saved in \algo{\ref{algo:plp_serial}}: if the regions $\region[i]$ and $\region[j]$ are adjacent by crossing the boundaries $\frontier[m]$ and $\frontier[n]$, we set true to $(\region[i], \frontier[m])$ and $(\region[j], \frontier[n])$ in the adjacency table. The checker will find out the pair $(\region[k], \frontier[p])$ whose flag of adjacency is false. Then we cross the boundary $\frontier[p]$ and use \algo{\ref{algo:degeneracy}} to compute the missed region and the corresponding optimal function. The adjacencies of the new obtained region will be checked then, and the algorithm terminates when all the obtained regions have complete adjacencies.

\section{Overlapping regions and degeneracy}
\label{sec:deg}
Ideally, the parametric linear programming outputs a quasi-partition of the space of parameters, meaning that the produced regions do not have overlap except at their boundary (we shall from now on say ``do not overlap'' for short) and cover the full space of parameters.
This may not be the case due to two reasons:
geometric degeneracy, leading to overlapping regions, and imprecision due to floating-point arithmetic, leading to insufficient coverage. The latter will be dealt with by rational checker, which has been explained in \seci{\ref{sec:checker}}.

If regions do not overlap, it is possible to verify that the space of parameters is fully covered by checking that each boundary of a region is also a boundary of an adjacent region (proof in \seci{\ref{sec:adj_checker}}); otherwise, this means we have a boundary with nothing on the other side, thus the space is not fully covered.
This simple test does not work if regions overlap.
Furthermore, overlapping regions may be needlessly more numerous than those in a quasi-partition.
We thus have two reasons to modify our algorithm to get rid of overlapping regions.

Let us see how overlapping regions occur.
In a non-degenerate parametric linear program, for a given optimization function, there is only one optimal vertex (no \emph{dual degeneracy}), and this optimal vertex is described by only one optimal basis (no \emph{primal degeneracy}), i.e., there is a single optimal partition of variables into basic and non-basic.
Thus, in a non-degenerate parametric linear program, for a given vector of parameters there is one single optimal basis (except at boundaries), meaning that each optimal function corresponds to one region.
%in other words, the optimal regions for a quasi-partition.
However when there is degeneracy, there will be multiple bases corresponding to one optimal function, and each of them computes a region. These regions may be overlapping. We call the regions corresponds to the same optimal function \emph{degeneracy regions}.

\begin{theorem}
	There will be no overlapping regions if there is no degeneracy.
\end{theorem}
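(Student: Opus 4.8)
The plan is to prove the contrapositive: I will show that if two distinct regions overlap, then the program must be degenerate. I first recall, from the ``Obtaining rational solution'' paragraph, that the region attached to an optimal basis $B$ is exactly the set of parameter vectors $\bm{x}$ for which every reduced cost is non-negative, namely the conjunction of half-spaces $(O'_{\bullet j})^\mathsf{T}\bm{x}\ge 0$ ranging over the non-basic indices $j$; its interior is the set where all these inequalities hold strictly. Throughout, ``overlap'' must be read as overlap of \emph{interiors}, since two adjacent non-degenerate regions legitimately meet along a shared boundary facet (where one reduced cost vanishes), and such contact is not an overlap.

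The core local fact I would establish first is: if $\bm{x}_0$ lies in the interior of $\region[B]$, then the basic feasible solution $\bm{\lambda}^{(B)}$ of $B$ is the \emph{unique} optimum of the linear program~\eqref{problem:lp} instantiated at $\bm{x}_0$. Indeed, writing the objective in terms of the non-basic variables gives $z = z_0 + \sum_{j} \bar{c}_j(\bm{x}_0)\,\lambda_j$, where $\bar{c}_j(\bm{x}_0)=(O'_{\bullet j})^\mathsf{T}\bm{x}_0>0$ for every non-basic $j$ and $\lambda_j\ge 0$; any feasible point with a non-basic variable strictly positive has strictly larger objective, so the minimum is attained only at $\bm{\lambda}^{(B)}$. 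Note this also rules out dual degeneracy at $\bm{x}_0$: there is a single optimal vertex.

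With this in hand, suppose two regions $\region[B_1]$ and $\region[B_2]$ with $B_1 \neq B_2$ overlap, and pick $\bm{x}_0$ in the interior of $\region[B_1]\cap\region[B_2]$. Applying the local fact to each basis, both $\bm{\lambda}^{(B_1)}$ and $\bm{\lambda}^{(B_2)}$ are the unique optimum of the same LP at $\bm{x}_0$, hence $\bm{\lambda}^{(B_1)}=\bm{\lambda}^{(B_2)}=:\bm{\lambda}^*$. Since $B_1$ and $B_2$ differ, there is an index that is basic for one basis and non-basic for the other; being non-basic, its value at $\bm{\lambda}^*$ is $0$, so a basic variable of the other basis equals $0$ at $\bm{\lambda}^*$. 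Thus $\bm{\lambda}^*$ is a degenerate vertex described by two distinct bases, i.e. the program exhibits \emph{primal degeneracy}, contradicting the no-degeneracy hypothesis. Therefore the interiors are disjoint and the regions do not overlap.

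I expect the main obstacle to be the clean separation of the boundary case from the interior case: the statement is false if ``overlap'' is taken to include boundary contact, so the argument hinges on the strictness of the reduced costs in the interior, and I must check that the uniqueness step genuinely fails on the boundary (where it should, since adjacent regions share facets). A secondary subtlety is making explicit that strictly positive reduced costs exclude \emph{both} forms of degeneracy locally, so that the only mechanism capable of producing a second optimal basis at $\bm{x}_0$ is the coincidence of two bases at one degenerate vertex analyzed above.
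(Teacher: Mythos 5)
Your proposal is correct in its main thrust and proves the same contrapositive as the paper (overlap forces degeneracy), but your execution is genuinely sharper than the paper's. The paper's own proof stops at the observation that a parameter point in $\region[i]\cap\region[j]$ admits two optimal bases, and then simply reads off the definitions: if the two bases describe the same vertex it is primal degeneracy, if different vertices it is dual degeneracy. You insert a key lemma the paper does not state: at a point $\bm{x}_0$ \emph{interior} to a region, the strictly positive reduced costs make the basic solution the unique optimum of the instantiated LP, which forces the two basic solutions to coincide and pins the degeneracy down as specifically primal. This buys two things. First, an explicit treatment of the boundary-versus-interior distinction, which is needed for the theorem to be non-vacuous: adjacent regions always share facets, and at a facet two bases are simultaneously optimal, so ``overlap'' must mean interior overlap --- exactly the convention set at the start of \seci{\ref{sec:deg}}, which the paper's proof never actually invokes. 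Second, the stronger conclusion that interior overlap can only come from primal degeneracy, which is consistent with (and effectively re-derives part of) the paper's separate claim that dual degeneracy is absent.

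One hole should be patched. Your identification ``interior of the region $=$ points where all reduced costs are strictly positive'' fails precisely when some column $O'_{\bullet j}$ is identically zero: then the strict set is empty while the region can still have nonempty interior, and your local uniqueness fact breaks down (the optimum need not be unique at interior points). However, an identically zero reduced cost is itself degeneracy: that non-basic variable can be increased at every parameter point of the region without changing the objective value, so either a pivot yields a second optimal basis --- the same vertex (primal degeneracy) or a distinct optimal vertex (dual degeneracy) --- or the objective is constant along a feasible ray, so the optimum is again not unique. In every branch the no-degeneracy hypothesis is violated, so the contrapositive conclusion survives; adding this one case split closes the argument. Your final remark about ``strictly positive reduced costs excluding both forms of degeneracy'' gestures at this subtlety but does not actually discharge it, since the issue is not whether strict positivity excludes degeneracy but whether interior points enjoy strict positivity at all.
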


\begin{proof}
	In parametric linear programming, the regions are yielded by the partition of variables into basic and non-basic, i.e., each region corresponds to one basis.
	The parameters within one region lead the PLP problem to the same partition of variables.
	If there are overlapping regions, say $\region[i]$ and $\region[j]$, the PLP problem will be optimized by multiple bases when the parameters belong to $\region[i] \cap \region[j]$. 
	In this case there must be degeneracy:
	these multiple bases may lead to multiple optimal vertex when we have dual degeneracy, or the same optimal vertex when we have primal degeneracy.
	By transposition, we know that if there is no degeneracy the PLP problem will always obtain a unique basis with given parameters, and there will be no overlapping regions.
	\qed
\end{proof}

We thus need to get rid of degeneracy.
We shall first prove that there is no dual degeneracy in our PLP algorithm, and then deal with the primal degeneracy.

\subsection{Dual degeneracy}
\begin{theorem}
For projection and convex hull, the parametric linear program exhibits no dual degeneracy.
\end{theorem}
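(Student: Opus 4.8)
The plan is to argue by contradiction. Suppose the program exhibits dual degeneracy: for some fixed parameter vector $\bm{x}$, taken in the interior of its region (the solver only ever instantiates generic parameter vectors), the LP of \prob{\ref{problem:lp}} has two distinct optimal vertices $\bm{\lambda}_1^* \neq \bm{\lambda}_2^*$. By standard LP theory the whole segment $[\bm{\lambda}_1^*,\bm{\lambda}_2^*]$ is then optimal, so the objective is constant along the direction $\bm{d} = \bm{\lambda}_2^* - \bm{\lambda}_1^*$, and $\bm{d}$ preserves all the equality constraints of \prob{\ref{problem:lp}}. I want to show this forces the two vertices to encode the \emph{same} projected constraint, so that the multiplicity is primal, not dual.

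The key is to exploit the special shape of the projection program. Using the normalization constraint $(*)$, which evaluates to $1$ on the whole feasible set, I would rewrite the objective of any feasible $\bm{\lambda}$ as $1 + \bm{c}^{\mathsf T}(\bm{x}' - \bm{p}')$, where $\bm{c} = A^{\mathsf T}\bm{\lambda}$ and $\bm{x}', \bm{p}'$ are the restrictions of $\bm{x}, \bm{p}$ to the \emph{kept} (non-eliminated) coordinates. Indeed, constraints $(**)$ force the eliminated coordinates of $A^{\mathsf T}\bm{\lambda}$ to vanish, so $\bm{c}$ is exactly the normal vector of the projected inequality that the Farkas combination $\bm{\lambda}$ certifies, and the objective value depends on $\bm{\lambda}$ \emph{only} through this recovered inequality. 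Consequently $Z^*(\bm{x})$ is a pointwise minimum of affine functions of $\bm{x}'$, hence concave and piecewise affine; on the interior of each region it coincides with a single affine piece whose gradient is the optimal normal $\bm{c}$.

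Now I would close the argument. Applying the rewriting to the optimal edge direction $\bm{d}$, constancy of both the objective and the normalization along $\bm{d}$ collapses to the single equation $(\bm{c}_2 - \bm{c}_1)^{\mathsf T}(\bm{x}' - \bm{p}') = 0$, with $\bm{c}_k = A^{\mathsf T}\bm{\lambda}_k^*$. If dual degeneracy held on a full-dimensional set of parameters, the two vertices would make two affine pieces $1 + \bm{c}_1^{\mathsf T}(\bm{x}' - \bm{p}')$ and $1 + \bm{c}_2^{\mathsf T}(\bm{x}' - \bm{p}')$ simultaneously minimal on an open set, which is impossible for distinct affine functions; hence $\bm{c}_1 = \bm{c}_2$. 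The two vertices therefore recover one and the same projected constraint, so the phenomenon is \emph{primal} degeneracy — one geometric face reached through several bases — and not dual degeneracy, which would require two genuinely different optimal functions $Z^*$. Distinct optimal normals can tie only on the set $(\bm{c}_2 - \bm{c}_1)^{\mathsf T}(\bm{x}' - \bm{p}') = 0$, a lower-dimensional region boundary that never yields overlapping full-dimensional regions.

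For convex hull the argument transfers verbatim: replacing $(**)$ by the equalities $A^{\mathsf T}\bm{\lambda} - A'^{\mathsf T}\bm{\lambda}' = 0$ and $\bm{b}^{\mathsf T}\bm{\lambda} + \lambda_0 - \bm{b}'^{\mathsf T}\bm{\lambda}' - \lambda_0' = 0$ preserves the crucial fact that, on the feasible set, the objective collapses to an affine function of the coefficients of the recovered inequality, so the same rewriting and the same ``two affine pieces cannot both be minimal on an open set'' step apply. I expect the main obstacle to be the clean separation of the primal-degenerate case from the dual-degenerate one: one must argue carefully that whenever the recovered-normal map $\bm{\lambda} \mapsto A^{\mathsf T}\bm{\lambda}$ identifies two \emph{distinct} optimal vertices (which can only happen through linear dependence among the original constraints), the resulting multiplicity is primal degeneracy rather than dual. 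Pinning down this case analysis, together with the justification that $Z^*$ is differentiable with a single-valued gradient in region interiors, is the crux of the proof.
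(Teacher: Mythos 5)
Your argument is correct in substance, but it is not the paper's proof: it reaches the conclusion by a genuinely different route. The paper stays inside the simplex tableau. It supposes a second optimal basis is reached by pivoting, writes the pivot identity $Z^{*\prime}(\bm{x}) = Z^*(\bm{x}) - f_q c_j/m_{jq}$, uses the normalization constraint only through the single consequence that every optimal function takes the value $1$ at the normalization point (which forces the factor $t$ in $Z^{*\prime}=tZ^*$ to be $1$), concludes $f_q c_j/m_{jq}=0$ and hence $c_j=0$ because $f_q\neq 0$, i.e.\ the pivot was primal-degenerate; a sign argument ($D_i\le 0$ and $\sum_i D_i=0$ force all $D_i=0$) extends this to $N$ pivots. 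You instead use the normalization constraint structurally: subtracting it from the objective shows that on the whole feasible set the objective equals $1+\bm{c}^{\mathsf{T}}(\bm{x}'-\bm{p}')$, where $\bm{c}$ is the recovered normal, so each basis contributes an affine piece pinned to the value $1$ at $\bm{p}$, and two \emph{distinct} affine functions cannot both coincide with the pointwise minimum $Z^*$ on an open set of parameters. This buys you three things: you compare arbitrary pairs of optimal bases, not only pivot-adjacent ones, so no induction over pivot sequences is needed; the role of the normalization constraint becomes transparent rather than appearing only as ``$t=1$''; and you cover a case the paper's proof never treats, since it only considers entering variables with $f_q<0$ --- namely a reduced cost that vanishes \emph{identically} in $\bm{x}$, which genuinely occurs when the rows of $A$ are linearly dependent and two distinct vertices encode the same projected constraint. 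What the tableau version buys in exchange is the implementable signature of the phenomenon: degeneracy manifests as $c_j=0$, a zero-valued basic variable, which is exactly the test used in \algo{\ref{algo:degeneracy}}.

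One point you should state as part of the theorem rather than leave to your closing remarks: your proof (correctly) shifts what ``dual degeneracy'' means. Under the paper's literal definition --- one optimal vertex per instantiated objective --- the statement is not provable, because with linearly dependent input constraints (e.g.\ a duplicated row of $A$, rescaled) two distinct vertices $\bm{\lambda}_1^*\neq\bm{\lambda}_2^*$ are simultaneously optimal on a full-dimensional set of parameters. What you actually prove, and what \seci{\ref{sec:deg}} actually needs, is that any two bases optimal on a common full-dimensional set carry one and the same optimal function, so every multiplicity is of the kind handled by the degeneracy-region machinery. As written, your opening definition (two distinct optimal vertices) and your closing one (two genuinely different optimal functions) do not match, and only the second can be established; pinning that down is not a residual detail but the statement itself.
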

\begin{proof}
We shall see that the \emph{normalization constraint} (the constraint $(*)$ in \prob{\ref{problem:plp}}) present in the parametric linear programs defining projection and convex hull prevents dual degeneracy.

Assume that at the optimum $Z^*(\bm{x})$ we have the simplex tableau in Table \ref{tableau: proof}.  
$\lambda_k$ denote the decision variables: $\lambda_k \geq 0$.
In the current dictionary, the parametric coefficients of the objective function is $f_k = \bm{a}'_{i\bullet} \bm{x} + b'_i$. 
Assuming the variable leaving the basis is $\lambda_r$, and the entering variable is $\lambda_q$. Then $\lambda_r$ is defined by the $j$th row as $\sum_j m_{jp}\lambda_p + \lambda_r = c_j$, where $\lambda_p$ are nonbasic variables. That means $\lambda_r = c_j$ when the nonbasic variables reach their lower bound, which is 0 here.

Now we look for another optimum by doing one pivoting. As the current dictionary is feasible, we must have $c_j \geq 0$. To maintain the feasibility, we must choose $\lambda_q$ such that $m_{jq} > 0$. 
As we only choose the non-basic variable whose coefficient is negative to enter the basis, then we know $f_q < 0$. By pivoting we obtain the new objective function $Z'(\bm{\lambda},\bm{x}) = Z(\bm{\lambda},\bm{x}) - f_q \frac{c_j}{m_{jq}}$. The new optimal function is: 
\begin{equation}
	Z^{*'}(\bm{x}) = Z^*(\bm{x}) - f_q \frac{c_j}{m_{jq}}
	\label{eq:first}
\end{equation}

Let us assume that a dual degeneracy occurs, which means that we obtain the same objective function after the pivoting, i.e., $Z^{*'}(\bm{x}) = t Z^*(\bm{x})$, where $t$ is a positive constant. Due to the normalization constraint at the point $\bm{x}_0$ enforcing $Z^{*'}(\bm{x}_0)=Z^{*}(\bm{x}_0)=1$, we have $t=1$. Hence we will obtain
\begin{equation}
	Z^{*'}(\bm{x}) = Z^*(\bm{x})
	\label{eq:second}
\end{equation}

Considering the equation $\ref{eq:first}$ and $\ref{eq:second}$ we obtain 
\begin{equation}
	f_q \frac{c_j}{m_{jq}} = 0
\end{equation}

Since $f_q \neq 0$, $c_j$ must equal to 0, which means that we in fact faced a primal degeneracy.

Let $D_1 = f_q \frac{c_j}{m_{jq}}$, where the subscript of $D_1$ denotes the first pivoting. As $c_j \geq 0, f_q <0$ and $m_{jq} > 0$, we know $D_1 \leq 0$. Similarly in each pivoting we have $D_i \leq 0$.

If we generalize the situation above to $N$ rounds of pivoting, we will obtain:
\begin{equation}
	Z^{*'}(\bm{x}) = Z^*(\bm{x}) - \sum_{i=1}^N D_i
	\label{eq:general}
\end{equation}
If there is dual degeneracy $Z^{*'}(\bm{x}) = Z^*(\bm{x})$, and then
\begin{equation}
	\sum_{i=1}^N D_i = 0
	\label{apd_eq:sum_p}
\end{equation}
As $\forall i, D_i \leq 0$, \equa{\ref{apd_eq:sum_p}} implies $\forall i, D_i = 0$, which is possible if and only if all the $c_j$ equal to 0. For the same reason as above, in this case we can only have primal degeneracy.
\end{proof}

\subsection{Primal degeneracy}
Many methods to deal with primal degeneracy in non-parametric linear programming are known \cite{bland_1977_new,dantzig_2006_linear,dantzig_1951_application}; fewer in parametric linear programming~\cite{jones_2007_lexicographic}.	
We implemented an approach to avoid overlapping regions based on the work of Jones et al.\ \cite{jones_2007_lexicographic}, which used the perturbation method~\cite{dantzig_2006_linear}.
The algorithm is shown in \algo{\ref{algo:degeneracy}}.
Once entering a new region, we check if there is primal degeneracy: it occurs when one or several basic variables equal zero.
In this case we will explore all \emph{degeneracy regions} for the same optimum, using, as explained below, a method avoiding overlaps.

Let us consider a projected polyhedra in 3 dimensions with primal degeneracy, because of which there are multiple regions corresponding to the same face.
\fig{\ref{fig:deg}} shows the 2D view of the face. The yellow and red triangles represent the intersection of the regions with their face.
\fig{\ref{fig:deg_3}} shows the disappoint case where the regions are overlapping. The reason is that when the parameters locate in the orange part, two different bases will lead the constructed LP problem to optimum. We aim to avoid the overlap and obtain the result either in \fig{\ref{fig:deg_1}} or in \fig{\ref{fig:deg_2}}.

\begin{figure}[!htb]
	\centering
	\begin{subfigure}{.3\textwidth}
		\centering
		\includegraphics[width=\textwidth]{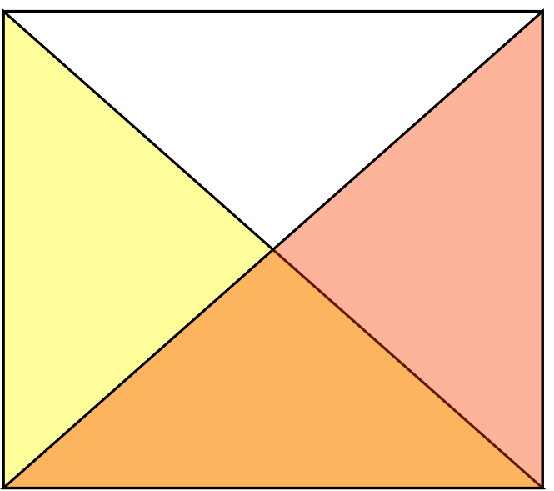}
		\caption{}
		\label{fig:deg_3}
	\end{subfigure}\hfil
	\begin{subfigure}{.3\textwidth}
		\centering
		\includegraphics[width=\textwidth]{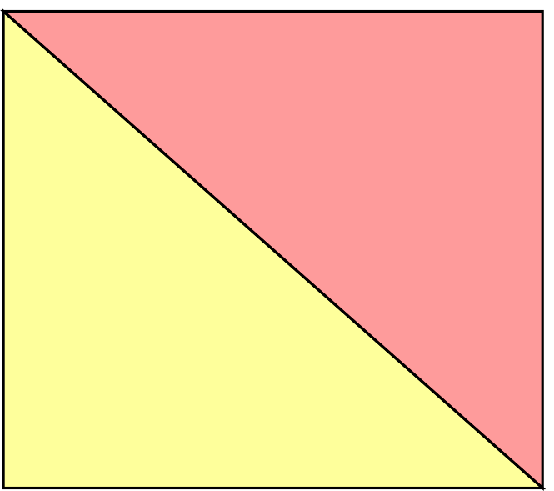}
		\caption{}
		\label{fig:deg_1}
	\end{subfigure}\hfil
	\begin{subfigure}{.3\textwidth}
		\centering
		\includegraphics[width=\textwidth]{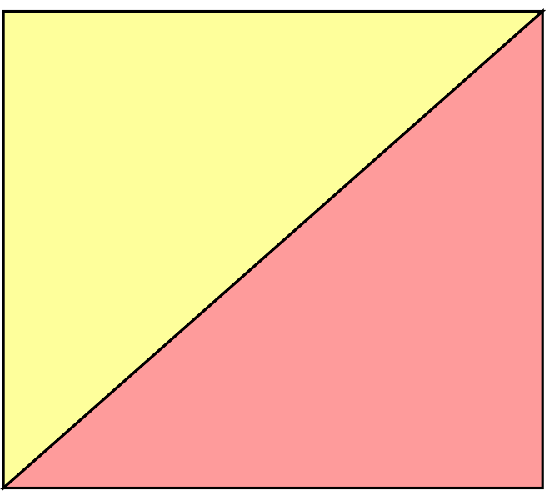}
		\caption{}
		\label{fig:deg_2}
	\end{subfigure}
	\caption{Example of overlapping regions.}
	\label{fig:deg}
\end{figure}

Our solution against overlaps is to make the optimal basis unique for given parameters of the objective function by adding perturbation terms to the right side of the constraints~\cite{jones_2007_lexicographic}.
These perturbation terms are ``infinitesimal'', meaning that the right-hand side, instead of being a vector of rational scalars, becomes a matrix where the first column corresponds to the original vector, the second column corresponds to the first infinitesimal, the third column to the second infinitesimal, etc.
The same applies to $\bm{\lambda}$.
Instead of comparing scalar coordinates using the usual ordering on rational numbers, we compare line vectors of rationals with the lexicographic ordering.
After the perturbation, there will be no primal degeneracy as all the right-hand side of the constraints cannot be equal.

The initial perturbation matrix is a $k*k$ identity matrix: $M_p = I$, where $k$ is the number of constraints. Then the perturbation matrix will be updated as the reconstruction of the constraint matrix.
After adding this perturbation matrix, the right-hand side becomes $B = [\bm{b}|M_p]$. 
The new constants are vectors in the form of $\bm{v_i} = [b_i\ 0\ \cdots\ 1\ \cdots\ 0]$.
We compare the vectors by lexico-order: $\bm{v_i} > \bm{v_j}$ if the first non-zero element of $\bm{v_i}$ is larger than that of $\bm{v_j}$.

To obtain a new basis, in contrast to working with non-degeneracy regions, we do not solve the problem using floating point solver.
Instead, we pivot directly on the perturbed rational matrix.
Each non-basic variable will be chosen as entering variable. Then from all the constraints in which $b_i=0$, we select the basic variable $\lambda_l$ in $C_i$ whose ratio $\frac{\bm{v_i}}{a_{ij}}$ is smallest as the leaving variable, where $j$ is the index of the entering variable. If such a leaving variable exist, we will obtain a degeneracy region: as $b_i=0$, the new optimal function will remain the same. Otherwise it means that a new optimal function will be obtained by crossing the corresponding frontier. The latter will not be treated by this algorithm, but will be computed with a task point by \algo{\ref{algo:plp_serial}}. We maintain a list of bases which have been explored. The algorithm terminates when all the degeneracy regions of the same optimal function are found.

\begin{algorithm}[!htb]
\DontPrintSemicolon
\SetNoFillComment
\KwIn{\var{\float[w]}: the task point}
\otherinput{\var{\rational[plp]}: the PLP problem to be solved}
\KwOut{degeneracy regions correspond to the same optimal solution}
\Fn{DiscoverNewRegion(\var{\float[w], \float[plp]})}{
	\var{basicIdx} = GlpkSolveLp(\var{\float[w], \float[plp]}) \\
	\If{degenerate}{
		\var{size} = GetSize(basicIdx) \\
		\var{\rational[perturbM]} = GetIdentityMatrix(size, size) \\
		\var{basisList} = \none \\
		Insert(\var{basisList}, \var{basicIdx}) \\
		\var{degBasic} = \none \\
		\ForEach{basic variable \var{v}}{
			\If{\var{v} == 0}{
				Insert(\var{degBasic}, GetIdx(\var{v})) \\
			}
		}
		\While{\var{basisList} $\neq$ \none}{
			\var{currBasis} = GetBasis(\var{basisList}) \\
			\If{\var{currBasis} has been found}{
				continue
			}
			\var{nonBasicIdx} = GetNonBasic(\var{currBasis}) \\
			(\var{\rational[reconstructM], \rational[perturbM]}) = Reconstruct(\var{\rational[plp], basicIdx, \rational[perturbM]}) \\
			\var{(\rational[newOptimum], \bothfields[newRegion])} =	 ExtractResult(\var{\rational[reconstructM], nonbasicIdx}) \\
			\var{activeIdx}=Minimize(\var{\float[newRegion]}) \\
			\var{\rational[minimizedR]} = GetRational(\var{\rational[newRegion]},  avtiveIdx) \\
			Insert(\var{\rational[optimums]}, \var{\rational[newOptimum]}) \\
			Insert(\var{\rational[regions]}, \var{\rational[newRegion]}) \\
			\ForEach{constraint \var{i} in \var{\rational[minimizedR]}}{
				\var{enteringV} = GetIdx(\var{i}) \\
				\var{leavingV} = SearchLeaving(\var{degBasic}, \rational[perturbM]) \\
				\If{\var{leavingV $\neq$ \none}}{
					\var{newBasis} = GetNewBasis(\var{basicIdx}, \var{enteringV}, \var{leavingV}) \\
					Insert(\var{basisList}, \var{newBasis})
				}
			}
		}
	}
}
\caption{Algorithm to avoid overlapping regions.}
\label{algo:degeneracy}
\end{algorithm}

\section{Experiments}
In this section, we analyze the performance of our parametric linear programming solver on projection operations.
We compare its performance with that of the NewPolka library of Apron%
\footnote{\url{https://github.com/antoinemine/apron}}
and ELINA library \cite{singh_2017_fast}.
Since NewPolka and ELINA do not exploit parallelism, we compare it to our library running with only one thread.

We used three libraries in our implementation:
\begin{itemize}
	\item Eigen 3.3.2 for floating-point vector and matrix operations;
	\item FLINT 2.5.2 for rational arithmetic, vector and matrix operations;
	\item GLPK 4.6.4 for solving linear programs in floating-point.
\end{itemize} 

The experiments are carried out on 2.30GHz Intel Core i5-6200U CPU.

\subsection{Experiments on random polyhedra}
\subsubsection{Benchmarks}

The benchmark contains randomly-generated polyhedra, in which the coefficients of constraints are in the range of -50 to 50. Each polyhedron has 4 parameters: number of constraints (CN), number of variables (VN), projection ratio(PR) and density (D).
The projection ratio is the proportion of eliminated variables: for example if we eliminate 6 variables out of 10, the projection ratio is 60\%.
Density represents the ratio of zero coefficients: if there are 2 zeros in 10 coefficients, density is 20\%.
In each experiment, we project 10 polyhedra generated with the same parameters.
To smooth out experimental noise, we do each experiment 5 times,
i.e., 50 executions for each set of parameters. Then we calculate the average execution time of the 50 executions. 

\subsubsection{Experimental results}
We illustrate the execution time (in seconds) by line charts. The blue line is the performance of NewPolka library of Apron, and the red line is that of our serial PLP algorithm. To illustrate the performance benefits from the floating-point arithmetic, we turned off \glpk and always use the rational LP solver, and the execution time is shown by the orange lines\footnote{The minimization is still computed in floating-point numbers.}. It is shown that solving the LP problems in floating-point numbers and reconstructing the rational simplex tableau leads to significant improvement of performance.

By a mount of experiments, we found that when the parameters $CN=19, VN=8, PR= 62.5\%$ and $D=37.5\%$, the execution time of PLP and Apron are similar, so we maintain three of them and vary the other to analyze the variation of performance.

Recall that in order to give a constraint description of the projection of a convex polyhedron $P$ in constraint description, Apron (and all libraries based on the same approach, including PPL) computes a generator description of $P$, projects it and then computes a minimized constraint description.

\paragraph{Projection ratio}
\label{sec:expr_proj}
In \fig{\ref{fig:exp_p_1}} we can see that execution time of PLP is almost the same for all the cases, whereas that of Apron changes significantly. Apron incurs a large cost when it computes the generator representation of each polyhedron. We plot the execution time of PLP (\fig{\ref{fig:exp_p_3}}) and the number of regions (\fig{\ref{fig:exp_p_4}}), which vary with the same trend. That means the cost of our approach depends mostly on the number of regions to be explored.
To illustrate it more clearly, the zoomed figure is shown in \fig{\ref{fig:exp_p_2}}.

The more variables are eliminated, the lower dimension the projected polyhedron has. Then the cost of chernikova's algorithm to convert from the generators into the constraints will be less. This explains why Apron is slow when the projection ratio is low, and becomes faster when the number of eliminated variable is larger.

\begin{figure}[!htb]
	\centering
	\begin{subfigure}{.45\textwidth}
		\centering
		\includegraphics[width=\textwidth, height=3.8cm]{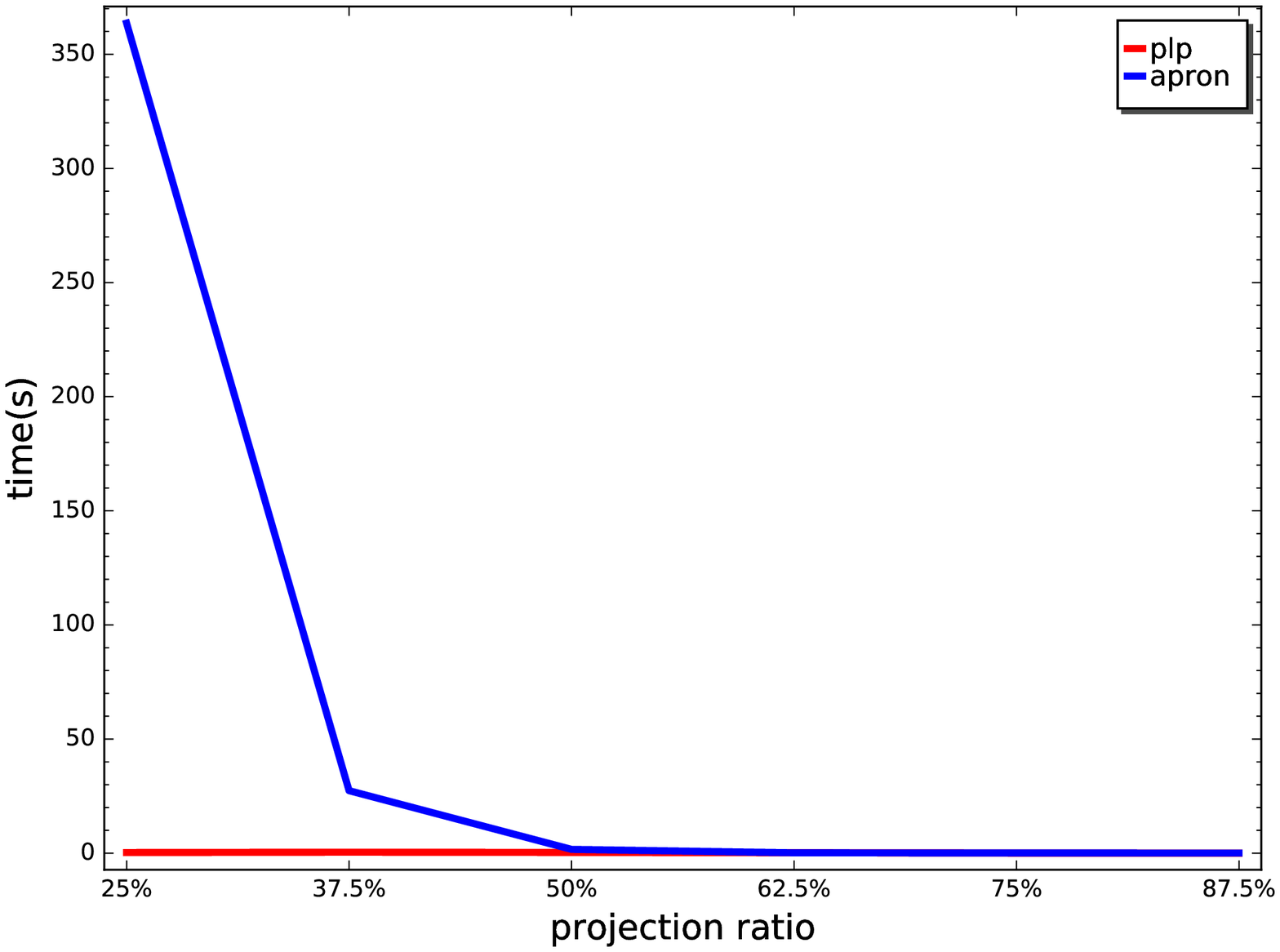}
		\caption{}
		\label{fig:exp_p_1}
	\end{subfigure}
	\begin{subfigure}{.45\textwidth}
		\centering
		\includegraphics[width=\textwidth, height=3.8cm]{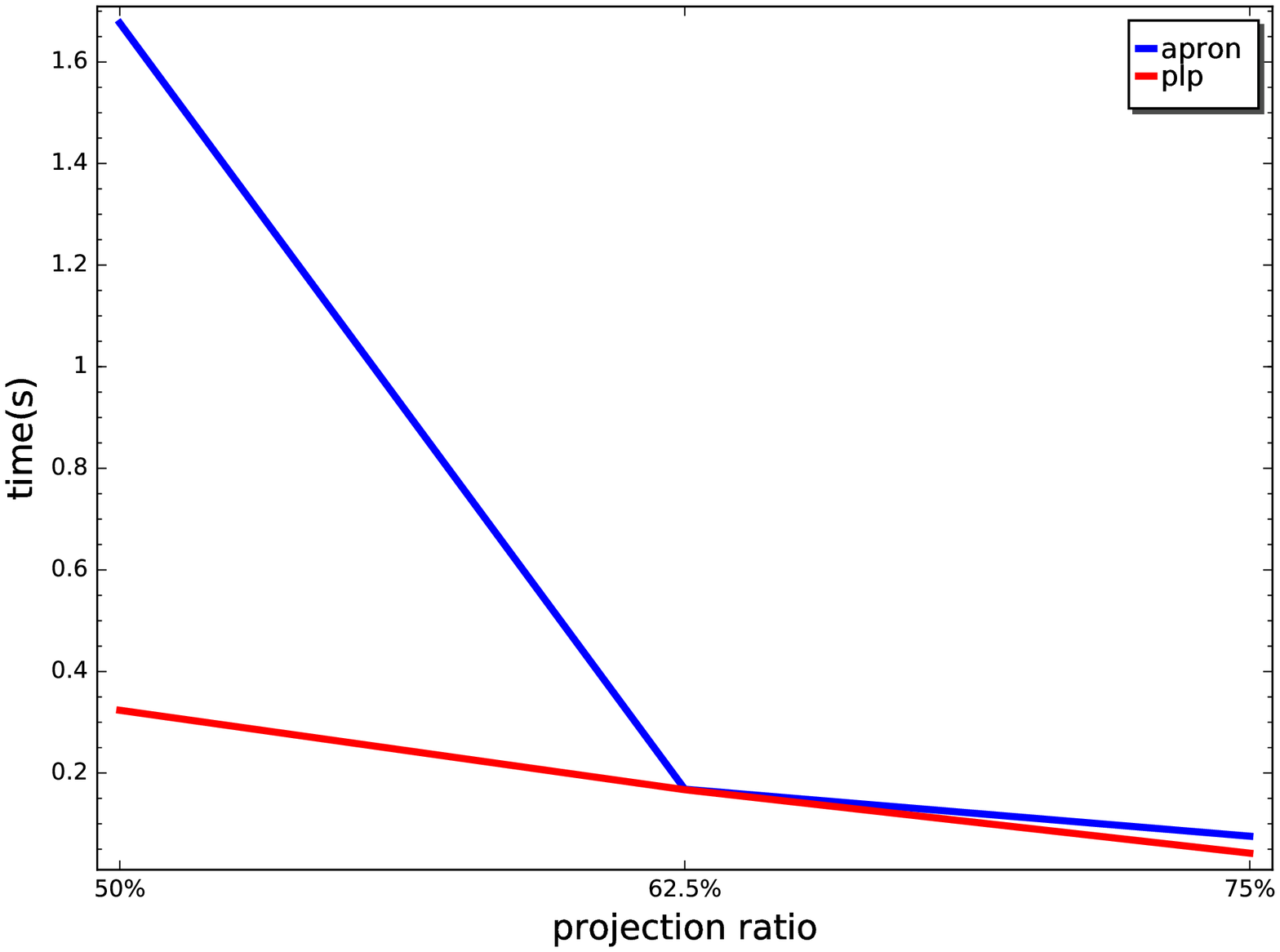}
		\caption{}
		\label{fig:exp_p_2}
	\end{subfigure}
	\begin{subfigure}{.45\textwidth}
		\centering
		\includegraphics[width=\textwidth,height=3.8cm]{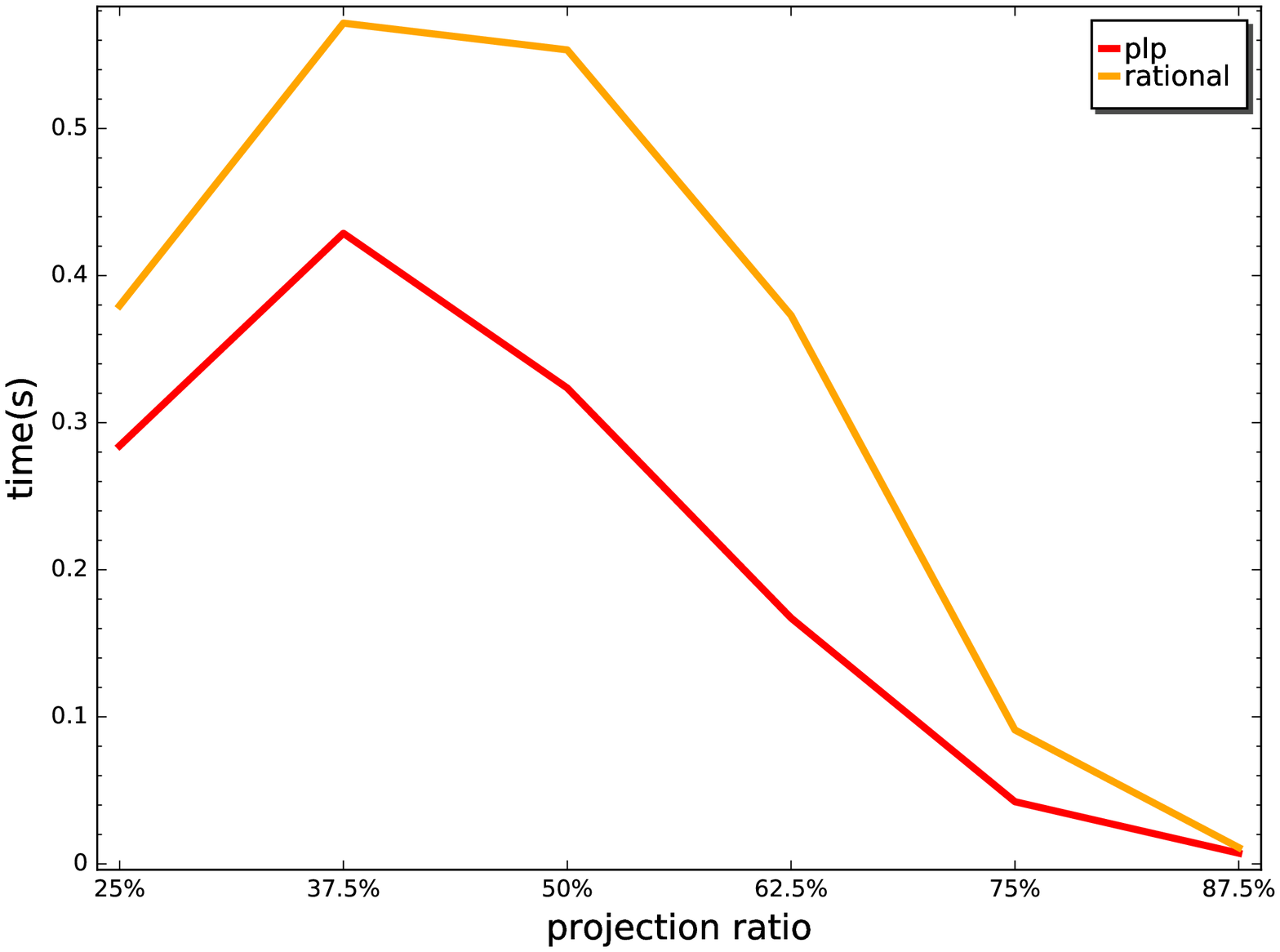}
		\caption{}
		\label{fig:exp_p_3}
	\end{subfigure}
	\begin{subfigure}{.45\textwidth}
		\centering
		\includegraphics[width=\textwidth,height=3.8cm]{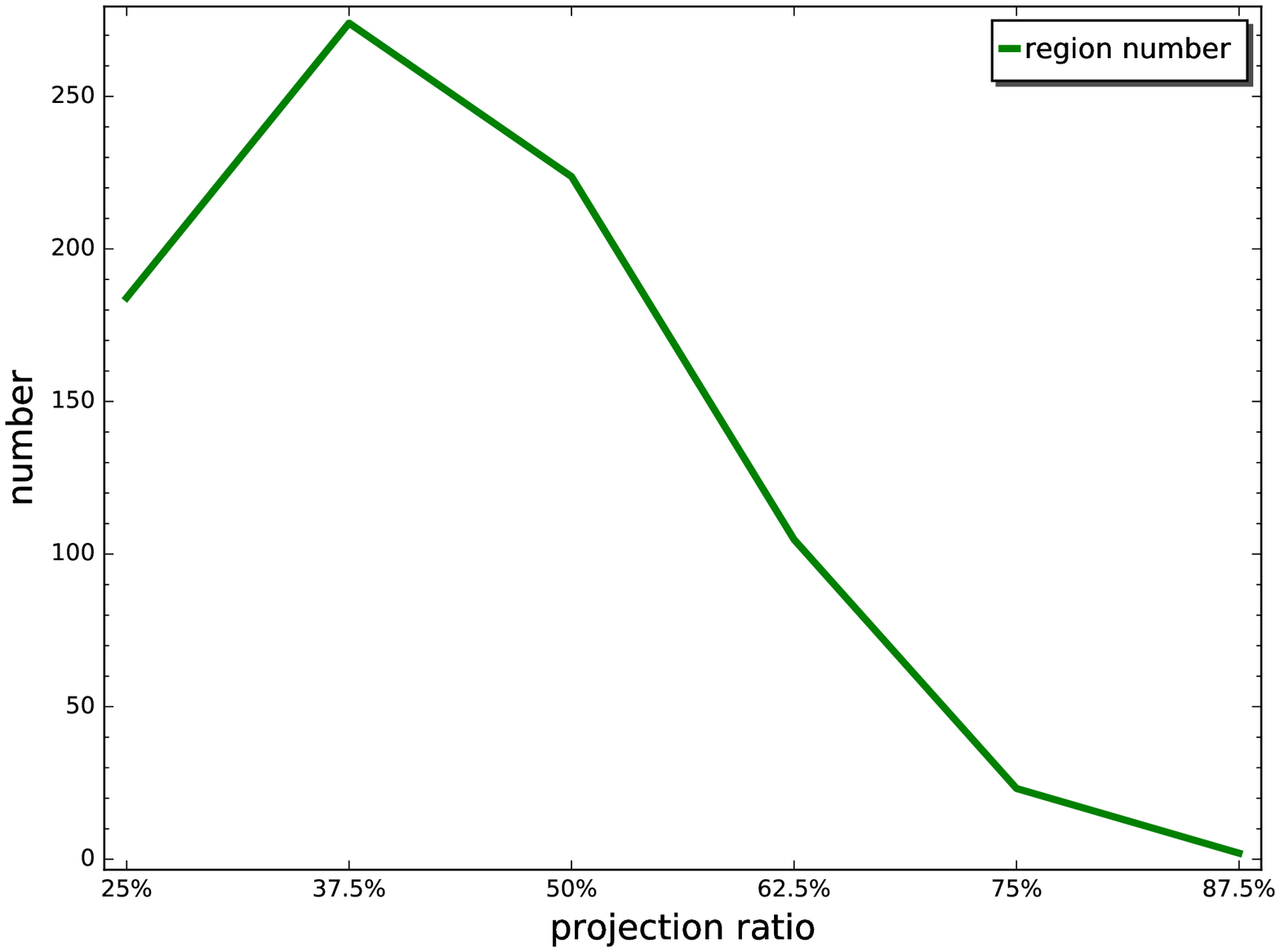}
		\caption{}
		\label{fig:exp_p_4}
	\end{subfigure}
	\caption{CN=19,VN=8,D=37.5\%,PR=[25\%,87.5\%]}
	\label{fig:exp_p}
\end{figure}

\paragraph{Number of constraints}
Keep the other parameters, we increase the number of constraints from 12 to 30. The result is shown in \fig{\ref{fig:exp_c}}.
We can see that Apron is faster than PLP when constraints are fewer than 19; beyond that, its execution time increases significantly. In contrast, the execution time of PLP grows much more slowly.

\begin{figure}[!htb]
	\centering
	\includegraphics[width=0.45\textwidth,height=3.8cm]{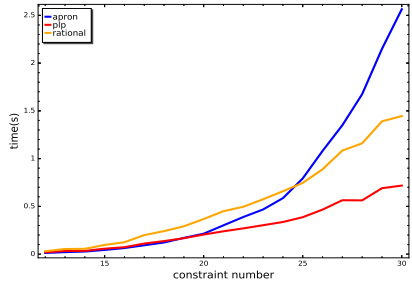}
	\caption{CN=[12,30],VN=8,D=37.5\%,PR=62.5\%}
	\label{fig:exp_c}
\end{figure}

\paragraph{Number of variables}
Here the range of variables is 3 to 15.
\fig{\ref{fig:exp_v_1}} shows that the performance are similar for Apron and PLP when variables are fewer than 11, but after that the execution time of Apron explodes as the variable number increases. The zoomed figure is shown in \fig{\ref{fig:exp_v_2}}.

Our understanding is that the execution time of Apron is dominated by the conversion to the generator description, which is exponential in the number of constraints for polyhedra resembling hypercubes---likely for a nonempty polyhedron built from $m$ random constraints in a space of dimension less than~$m$.

\begin{figure}[!htb]
	\centering
	\begin{subfigure}{.45\textwidth}
		\centering
		\includegraphics[width=\textwidth,height=3.8cm]{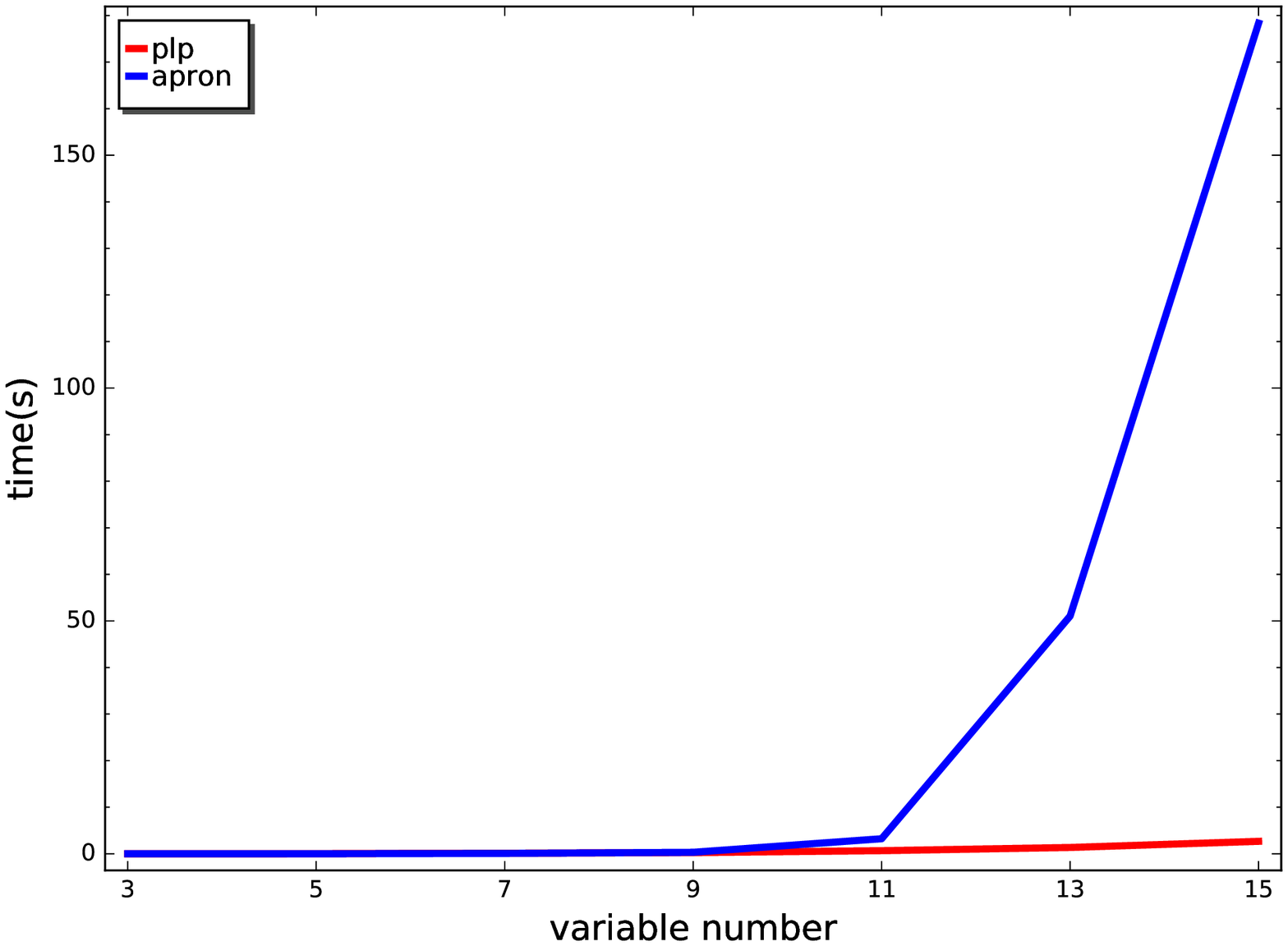}
		\caption{}
		\label{fig:exp_v_1}
	\end{subfigure}
	\begin{subfigure}{.45\textwidth}
		\centering
		\includegraphics[width=\textwidth,height=3.8cm]{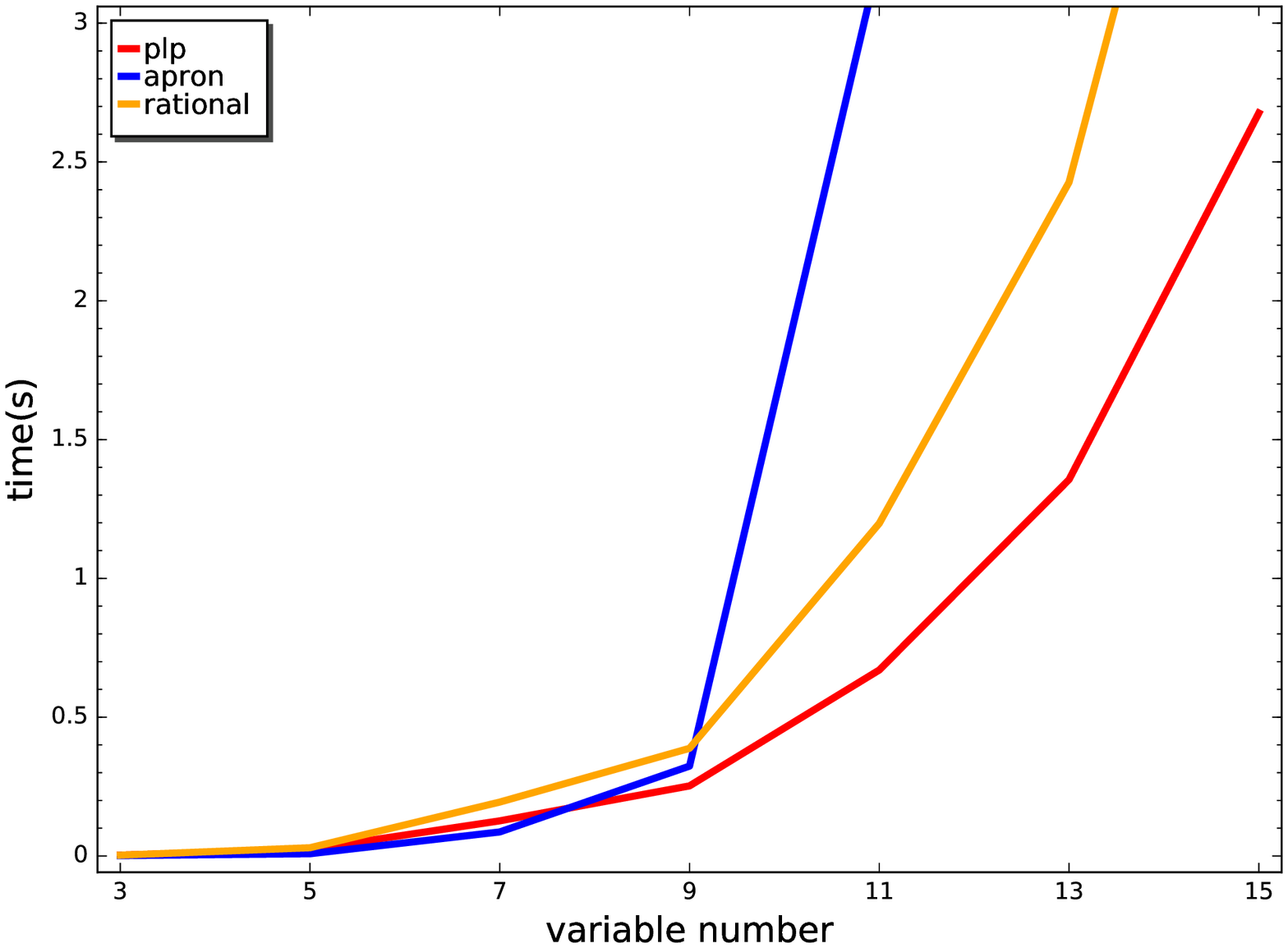}
		\caption{}
		\label{fig:exp_v_2}
	\end{subfigure}
	\caption{CN=19,VN=[3,15],D=37.5\%,PR=62.5\%}
	\label{fig:exp_v}
\end{figure}

\paragraph{Density}
The \fig{\ref{fig:exp_d}} shows the effect of density. The execution time varies for both Apron and PLP with the increase of density, with the same trend.

\begin{figure}[!htb]
	\centering
	\includegraphics[width=0.45\textwidth,height=3.8cm]{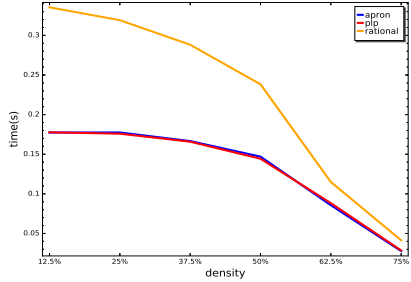}
	\caption{CN=19,VN=8,D=[12.5\%,75\%],PR=62.5\%}
	\label{fig:exp_d}
\end{figure}

\subsection{Experiments on SV-COMP benchmarks}
In this experiment we used the analyzer Pagai \cite{henry_2012_pagai} and SV-COMP benchmarks \cite{beyer_2019_automatic}.
We randomly selected C programs from the category of Concurrency Safety, Software System and Reach Safety.
The result is compared with NewPolka and ELINA.
In Table \ref{tableau:exp}, we show the name of programs, the number of polyhedra to be projected (Num), the total and average time (AveT) spent on projection, the average constraint number (ACN) and the average variable number (AVN).
The time is in milliseconds.
As it is shown, our algorithm has advantage over Apron when the polyhedra contain more constraints and/or in higher dimension, e.g, polyhedra in ldv-linux-3.0-module-loop and ldv-linux-3.0-bluetooth, as we get rid of maintaining double description.
ELINA is the most efficient.

\begin{table}
\begin{tabular}{|c|c|c|c|c|c|c|c|c|c|}
	\hline
	Program & Num & Apron & AveT & ELINA & AveT & PLP & AveT & ACN & AVN \\
	\hline 
	\rowcolor{gray!20}
	pthread-complex-buffer & 405 & 116.03 & 0.29 & 71.46 & 0.18 & 128.56 & 0.32 & 3.25 & 3.06 \\
	ldv-linux-3.0-module-loop & 10745 & \textbf{6148.74} & 0.57 & 2346.16 & 0.22 & \textbf{3969.44} & 0.37 & 3.16 & \textbf{16.19}  \\ 
	\rowcolor{gray!30}
	ssh-clnt-01.csv & 17655 & 5081.45 & 0.29 & 3123.7 & 0.18 & 5664.97 & 0.32 & 3.53 & 2.61 \\ 
	ldv-consumption-firewire & 30650 & 13763.71 & 0.45 & 8574.01 & 0.28 & 21493.57 & 0.7 & 7.19 & 6.14 \\ 
	\rowcolor{gray!20}
	busybox-1.22.0-head3 & 18340 & 13686.23 & 0.75 & 6930.74 & 0.38 & 23971.92 & 1.31 & 10.94 & 6.14 \\ 
	ldv-linux-3.0-magicmouse & 20 & 6.6 & 0.33 & 4.24 & 0.21 & 10.3 & 0.52 & 5 & 5 \\ 
	\rowcolor{gray!20}
	ldv-linux-3.0-usb-input & 1230 & 327.22 & 0.27 & 198.0 & 0.16 & 356.71 & 0.29 & 3 & 2 \\ 
	bitvector-gcd & 240 & 78.14 & 0.33 & 46.06 & 0.19 & 174.55 & 0.73 & 5 & 3 \\ 
	\rowcolor{gray!20}
	array-example-sorting & 5395 & 1769.75 & 0.33 & 1081.45 & 0.2 & 3413.21 & 0.63 & 4.78 & 3.67 \\ 
	ldv-linux-3.0-bluetooth & 15250 & \textbf{3898819.28} & 255.66 & 37477.14 & 2.46 & \textbf{190001.11} & 12.46 & \textbf{20.62} & \textbf{17.66} \\ 
	\rowcolor{gray!20}
	ssh-srvr-01 & 82500 & 35806.67 & 0.43 & 20170.35 & 0.24 & 98763.8 & 1.2 & 5.91 & 4.68 \\ 
	\hline
\end{tabular}
\caption{Performance on SV-COMP benchmarks.}
\label{tableau:exp}
\end{table}

\subsection{Analysis}
We conclude that our approach has remarkable advantage over Apron for projecting polyhedra in large dimension (large number of constraints or/and variables); it is not good choice for solving problems with few constraints in small dimension.

Our serial algorithm is less efficient than ELINA, but our approach is parallelable and is able to speed up with multiple threads.

\section{Conclusion and future work}
We have presented an algorithm to project convex polyhedra via parametric linear programming. It internally uses floating-point numbers, and then the exact result is constructed over the rationals.
Due to floating-point round-off errors, some faces may be missed by the main pass of our algorithm.
However, we can detect this situation and recover the missing faces using an exact solver.

We currently store the regions that have been explored into an unstructured array; checking whether an optimization direction is covered by an existing region is done by linear search.
This could be improved in two ways:
\begin{inparaenum}[i)]
\item regions corresponding to the same optimum (primal degeneracy) could be merged into a single region;
\item regions could be stored in a structure allowing fast search.
\end{inparaenum}
For instance, we could use a binary tree where each node is labeled with a hyperplane, and each path from the root corresponds to a conjunction of half-spaces; then each region is stored only in the paths such that the associated half-spaces intersects the region.

\printbibliography

\end{document}